\newcommand\blfootnote[1]{
	\begingroup\renewcommand\thefootnote{}\footnote{#1}
	\addtocounter{footnote}{-1}
	\endgroup}
\newtheorem{theorem}{Theorem}[section]
\newtheorem{lemma}{Lemma}[section]
\newtheorem{corollary}[theorem]{Corollary}
\newtheorem{remark}{Remark}[section]
\newtheorem{example}{Example}
\numberwithin{equation}{section}
\title{\bf Extremal graphs with respect to the total-eccentricity index}
\author[,1]{Rashid Farooq\thanks{Corresponding author.}}
\author[1]{Mehar Ali Malik}
\author[2]{Juan Rada}
\affil[1]{School of Natural Sciences,
National University of Sciences and Technology, Sector H-12, 4400 Islamabad, Pakistan}
\affil[2]{Instituto de Matem{\'a}ticas, Universidad de Antioquia Medell{\'i}n, Colombia}
\date{}
\begin{document}
\maketitle
\blfootnote{\raggedright Email addresses:
farook.ra@gmail.com (R. Farooq),
alies.camp@gmail.com (M. A. Malik),
pablo.rada@udea.edu.co (J. Rada).}

\begin{abstract}
In a connected graph $G$, the distance between two vertices of $G$ is the length of a shortest path between these vertices.
The eccentricity of a vertex $u$ in $G$ is the largest distance between $u$ and any other vertex of $G$.
The total-eccentricity index $\tau(G)$ is the sum of eccentricities of all vertices of $G$.
In this paper,
we find extremal trees, unicyclic and bicyclic graphs with respect to total-eccentricity index.
Moreover, we find extremal conjugated trees with respect to total-eccentricity index.
\end{abstract}
\begin{quote}
Keywords: Topological indices, total-eccentricity index, extremal graphs.
\end{quote}

\section{Introduction}
A topological index can be defined as a function
$T : \mathcal{G} \rightarrow \mathbb{R}$, 
where $\mathcal{G}$ denotes the class of all finite graphs, such that for any $G,H \in \mathcal{G}$, $T(G) = T(H)$ if $G$ and $H$ are isomorphic.
It is a numerical value which is associated with the chemical structure of a certain chemical compound. The aim of such association is to correlate various physico-chemical properties or some biological activity in a chemical compound.

Let $G$ be an $n$-vertex molecular graph with vertex set $V(G)$ and edge set $E(G)$.
The vertices of $G$ correspond to atoms and an edge between two vertices corresponds to the chemical bond between these vertices.
An edge between two vertices $u,v\in V(G)$ is denoted by $uv$.
The order and size of $G$ are respectively the cardinalities $|V(G)|$ and $|E(G)|$.
The neighbourhood $N_G(v)$ of a vertex $v$ in $G$ is the set of vertices adjacent to $v$.
The degree $d_G(v)$ of a vertex $v$ in $G$ is the cardinality $|N_G(v)|$.
If $d_G(v)=k$ for all $v\in V(G)$, then $G$ is called a $k$-regular graph.
A vertex of degree $1$ is called a pendant vertex.
A path $P_n$ of order $n$ and length $n-1$ is a connected graph with exactly two pendant vertices and $n-2$ vertices of degree $2$.
A $(v_1,v_n)$-path on vertices $v_1,v_2,\ldots,v_n$ with end-vertices $v_1$ and $v_n$ is denoted by $v_1v_2\ldots v_n$.
A graph $G$ is said to be connected if there exists a path between every pair of vertices in $G$.
A cycle $C_n$ of order and length $n$ is a connected graph all of whose vertices are of degree two.
A complete graph $K_n$ of order $n$ is a graph in which every two distinct vertices are adjacent.
Let $U$ and $V$ be two sets of vertices with $|U|=m$ and $|V|=n$. Then a complete bipartite graph $K_{m,n}$ is defined as a graph obtained by joining every vertex of $U$ with every vertex of $V$.
A star $S_n$ of order $n$ is a connected graph with $n-1$ pendant vertices and one vertex with degree $n-1$.
A tree is a connected graph containing no cycle.
Thus, an $n$-vertex tree is a connected graph with exactly $n-1$ edges.
An $n$-vertex unicyclic graph is a connected graph which contains $n$ edges.
Similarly, an $n$-vertex bicyclic graph is a connected graph which contains $n+1$ edges.

Now we define some basic graph operations which are required to construct some new classes of graphs used in this paper.
A graph $H$ is a subgraph of a graph $G$ if $V(H) \subseteq V(G)$ and $E(H) \subseteq E(G)$.
Let $u,v$ be two non-adjacent vertices of a graph $G$. Then the union of graph $G$ and edge $e=uv \notin E(G)$ is denoted as $G\cup \{e\}$.
The disjoint union of two graphs $G$ and $H$ with $V(G) \cap V(H) = \emptyset$ is defined as the graph $G\cup H$ with vertex set $V(G\cup H) = V(G) \cup V(H)$ and edge set $E(G\cup H) = E(G) \cup E(H)$.
Let $S\subset V(G)$, then the vertex deleted subgraph $G - S$ is obtained from $G$ by deleting all the vertices in $S$ from $G$ along with all the edges incident on the vertices of $S$.
If $S=\{u\}$, we simply write $G-u$.
The subgraph $G[S]$ of $G$ induced by $S\subseteq V(G)$ is the graph obtained by $G - S^c$, where $S^c = V(G)\setminus S$.
The subdivision of an edge $e = uv$ in $G$ is performed by replacing the edge $uv$ by a path $uwv$ of length 2, where $w\notin V(G)$.

A matching $M$ in a graph $G$ contains those edges of $G$ which do not share any vertex.
A vertex $u$ in $G$ is said to be $M$-saturated if an edge of $M$ is incident with $u$.
A matching $M$ is said to be perfect if every vertex in $G$ is $M$-saturated.
A conjugated graph is the one which contains a perfect matching.
The distance $d_G(u,v)$ between two vertices $u,v\in V(G)$ is defined as the length of a shortest path between $u$ and $v$ in $G$.
The eccentricity $e_G(v)$ of a vertex $v\in V(G)$ is defined as the largest distance from $v$ to any other vertex in $G$.
The diameter ${\rm diam}(G)$ and radius ${\rm rad}(G)$ of a graph $G$ are respectively defined as:
\begin{eqnarray}
\label{rad} {\rm rad}(G) &=& \min_{v\in V(G)}e_G(v),\\
\label{diam} {\rm diam}(G) &=& \max_{v\in V(G)}e_G(v).
\end{eqnarray}
A vertex $v\in V(G)$ is said to be central (resp. peripheral) if $e_G(v) = {\rm rad}(G)$ (resp. $e_G(v) = {\rm diam}(G)$).
The graph induced by the central vertices of $G$ is called the center of $G$, denoted as $C(G)$.

The first topological index was introduced by Wiener~\cite{W1947} in 1947, to calculate the boiling points of paraffins. In 1971, Hosoya~\cite{H71} defined the notion of Wiener index for any graph as the half sum of distances between all pairs of vertices.
The average-eccentricity of an $n$-vertex graph $G$ was defined in 1988 by Skorobogatov and Dobrynin~\cite{Skoro1988} as:
\begin{equation}\label{average}
avec(G) = \frac{1}{n}\sum_{u\in V(G)} e_G(u).
\end{equation}
In the recent literature, a minor modification of average-eccentricity index $avec(G)$ is used and cited as total-eccentricity index $\tau(G)$. It is defined as:
\begin{equation}\label{sigma}
\tau(G) = \sum_{u\in V(G)} e_G(u).
\end{equation}
%
Dankelmann et al.~\cite{Dankelmann2004} studied the bounds on the average-eccentricity of a graph and the change in $avec(G)$ when $G$ is replaced by any of its spanning subgraphs. Smith et al.~\cite{Smith2016} studied the extremal values of total-eccentricity index in trees.
Ili\'c~\cite{ilic12} studied some extremal graphs with respect to average-eccentricity.
%
Shi~\cite{Shi16} studied the chemical indices namely Randi\'c index, Harmonic index and the radius of graph and proved some conjectures for dense triangle free graphs.
Qi and Du~\cite{Qi17} studied the Zagreb eccentricity indices of trees.

For some special families of graphs of order $n\geq 4$, the total-eccentricity index is given as follows:
\begin{enumerate}
\item For a $k$-regular graph $G$, we have $\tau(G) = \frac{\xi(G)}{k}$,
\item $\tau(K_n) = n$,
\item $\tau(K_{m,n}) = 2(m+n)$, $m,n\geq 2$,
\item The total-eccentricity index of a star $S_n$, a cycle $C_n$ and a path $P_n$ is given by
\begin{eqnarray}
\label{tau Sn}
\tau(S_n) &=& 2n - 1,\\
\nonumber \tau(C_n) &=& \begin{cases}
\frac{n}{2}& \mbox{if~ } n\equiv 0 (\bmod 2)\\
\frac{n-1}{2} & \mbox{if~ } n\equiv 1 (\bmod 2),
\end{cases}\\
\label{tau Pn}
\tau(P_n) &=& \begin{cases}
\frac{3n^2}{4} - \frac{n}{2} & \mbox{if~ } n\equiv 0 (\bmod 2)\\
\frac{3n^2}{4} - \frac{n}{2} - \frac{1}{4} & \mbox{if~ } n\equiv 1 (\bmod 2).
\end{cases}
\end{eqnarray}
\end{enumerate}

This paper is structured as follows:
In Section 2, we study extremal trees with respect to the total-eccentricity index.
In Section 3, we study the extremal unicyclic and bicyclic graphs with respect to total-eccentricity index
and
Section 4 deals with the study of extremal conjugated trees with respect to total-eccentricity index.

\section{Extremal trees with respect to total-eccentricity index}

It is known that the star and the path respectively minimizes and maximizes the total-eccentricity index among all trees of a given order \cite{ilic12,Smith2016,Dankelmann2004}.
We go further and for a given tree $T$ with $n$ vertices, $n\geq 4$, we find a sequence of $n$-vertex trees
$T_1,T_2,\ldots, T_k$ such that
\begin{equation}\label{(1)}
\tau(T)<\tau(T_1)<\ldots< \tau(T_k) = \tau(P_n).
\end{equation}
Similarly,
for a given tree $T$ with $n$ vertices, $n\geq 4$, we find a sequence of $n$-vertex trees 
$T_1',T_2',\ldots, T_l'$ such that 
\begin{equation}\label{(2)}
\tau(T)>\tau(T_1')>\ldots>\tau(T_l') = \tau(S_n).
\end{equation}
Consider an $n$-vertex tree $T$ with vertex set $V(T)$ and edge set $E(T)$.
The following inequalities are easy to see:
\begin{eqnarray}
\label{2r} && {\rm rad}(T) \leq {\rm diam}(T) \leq 2\:{\rm rad}(T),\\
\nonumber && \frac{1}{2}{\rm diam}(T) \leq {\rm rad}(T) \leq e_T(v), \quad \forall~ v\in V(T).
\end{eqnarray}

A vertex $v\in V(T)$ is said to be an eccentric vertex of a vertex $u\in V(T)$ if $d_T(u,v) = e_T(u)$.
Let $E_T(u)$ denote the set of all eccentric vertices of $u$ in $T$.
For any $w\in E_T(u)$, the shortest $(u,w)$-path is called an eccentric path for $u$.
It is known that the center of a tree is $K_1$ if ${\rm diam}(T) = 2{\rm rad}(T)$ and is $K_2$ if ${\rm diam}(T) = 2 {\rm rad}(T) - 1$~\cite{Jordan1869}.
Moreover, every diametrical path in a tree $T$ contains $C(T)$~\cite{Chartrand1996}.
Clearly every diametrical path in $T$ is an eccentric path for some peripheral vertex $v$ in $T$.
In the next lemma, we prove a result about the eccentric vertices in a tree.
\begin{lemma}
\label{eccentric-3}
Let $T$ be an $n$-vertex tree and $P$ be a diametrical $(u,v)$-path in $T$. Then for any $x\in V(T)$, either $u\in E_T(x)$ or $v\in E_T(x)$.
\end{lemma}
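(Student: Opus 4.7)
The plan is to argue by contradiction: suppose there exists $x \in V(T)$ and an eccentric vertex $w \in E_T(x)$ with $d_T(x,w) > d_T(x,u)$ and $d_T(x,w) > d_T(x,v)$. The goal is to exhibit either a path from $u$ to $w$ or from $v$ to $w$ of length greater than $\mathrm{diam}(T) = d_T(u,v) =: d$, contradicting that $P$ is diametrical.

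To organize the geometry, I would introduce the median $M$ of the triple $\{u,v,x\}$, namely the unique vertex of $P$ closest to $x$ in $T$; both the $(x,u)$-path and the $(x,v)$-path route through $M$. Writing $e = d_T(x,M)$, $d_1 = d_T(M,u)$, $d_2 = d_T(M,v)$, we have $d_T(x,u) = e + d_1$, $d_T(x,v) = e + d_2$, and $d_1 + d_2 = d$. I would then split on whether $M$ lies on the unique $(x,w)$-path in $T$.

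If $M$ does lie on the $(x,w)$-path, then $d_T(x,w) = e + d_T(M,w)$, so the hypotheses force $d_T(M,w) > d_1$ and $d_T(M,w) > d_2$, and $w$ sits in some component of $T - M$. If that component contains $v$, then the $(u,w)$-path routes through $M$ and has length $d_1 + d_T(M,w) > d_1 + d_2 = d$; the symmetric case handles when the component contains $u$, and if $w$ lies in a third branch then both $d_T(u,w)$ and $d_T(v,w)$ exceed $d$. If $M$ does not lie on the $(x,w)$-path, then that path branches off the $(x,M)$-path at some vertex $r$ strictly before $M$, so $d_T(x,r) \leq e - 1$; since the $(x,u)$-path continues through $M$, the $(x,u)$- and $(x,w)$-paths diverge at exactly $r$, and the standard tree identity yields $d_T(u,w) = d_T(x,u) + d_T(x,w) - 2 d_T(x,r) \geq (e+d_1) + (e+d_2+1) - 2(e-1) = d + 3$, again contradicting that $d$ is the diameter.

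The only real bookkeeping issue is keeping straight the three paths emanating from $x$ to $u$, $v$, and $w$ and how they interact through the median $M$; once the case split on whether the $(x,w)$-path passes through $M$ is made, the tree distance identities force the contradiction with essentially no further work.
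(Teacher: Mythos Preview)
Your proof is correct and follows essentially the same approach as the paper: both argue by contradiction, project $x$ onto the diametrical path $P$ (your median $M$ is the paper's $u'$), and then split into cases to exhibit a path longer than $\mathrm{diam}(T)$. Your organization via the median and the explicit tree distance identity $d_T(u,w)=d_T(x,u)+d_T(x,w)-2d_T(x,r)$ is somewhat cleaner than the paper's three-case analysis (which introduces an extra auxiliary projection $u''$ of the eccentric vertex onto $P$), but the underlying idea is identical.
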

\begin{proof}
Let $x\in V(T)$. On contrary, assume that $u\notin E_T(x)$ and $v\notin E_T(x)$.
Let $v'\in E_T(x)$.
Without loss of generality, assume that $d_T(x,v) \geq d_T(x,u)$.
Then
\begin{eqnarray}\label{p-1}
d_T(x,v') > d_T(x,v) \geq d_T(x,u).
\end{eqnarray}
Let $u',u'' \in V(P)$ such that 
$d_T(x,u') = \min \{d_T(x,w) ~|~ w\in V(P)\}$
and $d_T(v',u'') = \min \{d_T(v',w) ~|~ w\in V(P)\}$.
Then 
\begin{eqnarray}
\label{p-2} d_T(x,v') &=& d_T(x,u') + d_T(u',v')\\
\label{p-3} d_T(x,v) &=& d_T(x,u') + d_T(u',v)\\
\nonumber d_T(x,u) &=& d_T(x,u') + d_T(u',u).
\end{eqnarray}
From~\eqref{p-1}-\eqref{p-3}, we obtain
\begin{eqnarray}\label{p-4}
d_T(u',v') > d_T(u',v).
\end{eqnarray}
Also inequality~\eqref{p-1} together with equations~\eqref{p-3} and \eqref{p-4} gives
\begin{eqnarray}\label{p-5}
d_T(u',v) \geq d_T(u',u).
\end{eqnarray}
We consider following three cases:\\
\textbf{Case 1.}
When $P$ and $(x,v')$-path have a vertex in common.
If $u''$ lies on $(x,v)$-path then by using~\eqref{p-4}, we get
\begin{eqnarray*}
d_T(u,v') &=& d_T(u,u') + d_T(u',v')\\
& > & d_T(u,u') + d_T(u',v)\\
& = & d_T(u,v) = {\rm diam}(T).
\end{eqnarray*}
This contradicts the fact that $P$ is a diametrical path.\\
\textbf{Case 2.}
If $u''$ lies on $(x,u)$-path then using~\eqref{p-4} and \eqref{p-5}, we obtain
\begin{eqnarray*}
d_T(v,v') &=& d_T(v,u') + d_T(u',v')\\
& > & d_T(u,u') + d_T(u',v)\\
& = & d_T(u,v) = {\rm diam}(T).
\end{eqnarray*}
This is again a contradiction.
\begin{figure}[h]
\includegraphics[width=13.5cm]{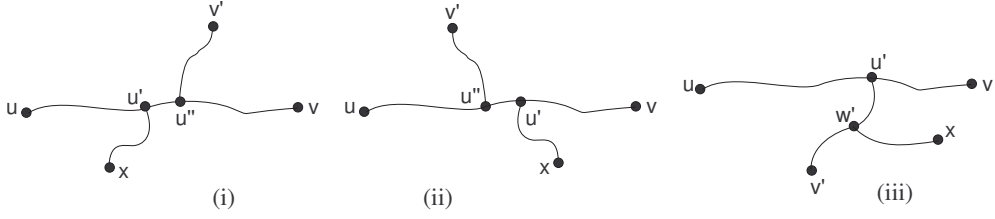}
\caption{The trees corresponding to the cases discussed in Lemma~\ref{eccentric-3}.}\label{fig remark 3}
\end{figure}
\\
\textbf{Case 3.}
When $P$ and $(x,v')$-path have no vertex in common.
We denote $(u,v')$-path by $P'$.
Let $w' \in V(P')$ such that
$d_T(x,w') = \min \{d_T(x,w) ~|~ w\in V(P')\}$.
Then
\begin{eqnarray}
\label{p-6} d_T(x,v') &=& d_T(x,w') + d_T(w',v')\\
\label{p-7} d_T(x,v) &=& d_T(x,w') + d_T(w',v).
\end{eqnarray}
Inequality~\eqref{p-1} along with equations~\eqref{p-6} and \eqref{p-7} gives
\begin{eqnarray}
\label{p-8} d_T(w',v') &>& d_T(w',v).
\end{eqnarray}
Using~\eqref{p-8}, we obtain
\begin{eqnarray*}
d_T(u,v') &=& d_T(u,w') + d_T(w',v')\\
& > & d_T(u,w') + d_T(w',v)\\
& = & d_T(u,u')+d_T(u',w')+d_T(w',u')+d_T(u',v)\\
& = & d_T(u,v) + 2 d_T(u',w') \geq {\rm diam}(T),
\end{eqnarray*}
which is a contradiction. The proof is complete.
\end{proof}
In the next result, we construct a new tree from the given tree with larger total-eccentricity index.
\begin{lemma}
\label{lemma 1}
Let $T\ncong P_n$ be an $n$-vertex tree, $n\geq 4$, and $u$, $v$ be the end-vertices of a diametrical path in $T$.
Take a pendant vertex $x$ of $T$ distinct from $u$ and $v$ and let $y$ be the unique neighbour of $x$. Construct a tree $T' \cong \{T - \{xy\}\}\cup \{xu\}$.
Then $\tau(T)<\tau(T')$.
\end{lemma}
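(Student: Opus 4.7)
The plan is to compare $\tau(T)$ and $\tau(T')$ vertex by vertex, exploiting the fact that $x$ remains a pendant vertex in both trees (attached to $y$ in $T$, to $u$ in $T'$). Because a pendant vertex never lies on a shortest path between two other vertices, distances among vertices of $V(T)\setminus\{x\}$ are preserved: $d_{T'}(w,z)=d_T(w,z)$ for all $w,z\neq x$, while distances to $x$ are simply redirected through $u$, i.e.\ $d_{T'}(w,x)=d_T(w,u)+1$ for every $w\neq x$.

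My first goal is to show $e_{T'}(w)\geq e_T(w)$ for every $w\in V(T)\setminus\{x\}$. Here Lemma~\ref{eccentric-3} does the work: it guarantees $e_T(w)=\max\{d_T(w,u),d_T(w,v)\}$, so it suffices to exhibit two vertices in $T'$ witnessing an eccentricity at least this large. The natural witnesses are $x$ and $v$, giving
\begin{equation*}
e_{T'}(w)\geq\max\{d_{T'}(w,x),d_{T'}(w,v)\}=\max\{d_T(w,u)+1,\,d_T(w,v)\}\geq e_T(w).
\end{equation*}

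Second, I secure a strict increase at a single vertex, which upgrades the summed inequality to $\tau(T')>\tau(T)$. The cleanest choice is $w=x$: since $x$ is adjacent to $u$ in $T'$, one has $e_{T'}(x)=1+\max_{z\neq x}d_T(u,z)=1+e_T(u)=\mathrm{diam}(T)+1$, while $e_T(x)\leq\mathrm{diam}(T)$ holds trivially, so $e_{T'}(x)>e_T(x)$. Summing over all vertices then yields the claim.

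I do not expect any serious obstacle. The only points that require care are verifying that $x$ is indeed pendant in $T'$ (so the distance-preservation observation is valid) and correctly invoking Lemma~\ref{eccentric-3} to reduce the eccentricity in $T$ to a maximum over the two specific vertices $u$ and $v$. The hypotheses $T\not\cong P_n$, $n\geq 4$, and $x\notin\{u,v\}$ ensure that such an $x$ exists and that the construction of $T'$ is well-defined.
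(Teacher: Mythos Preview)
Your proof is correct and follows essentially the same route as the paper: both arguments use the pendant-vertex observation to preserve distances among $V(T)\setminus\{x\}$, invoke Lemma~\ref{eccentric-3} to write $e_T(w)=\max\{d_T(w,u),d_T(w,v)\}$, and then compare with $\max\{d_{T'}(w,x),d_{T'}(w,v)\}=\max\{d_T(w,u)+1,d_T(w,v)\}$, with the strict gain coming from $e_{T'}(x)=\mathrm{diam}(T)+1$. The only cosmetic difference is that the paper also applies Lemma~\ref{eccentric-3} to $T'$ (noting that the $(x,v)$-path is diametrical there) to obtain $e_{T'}(w)=\max\{d_{T'}(w,x),d_{T'}(w,v)\}$ as an equality, whereas you use only the trivial lower bound $e_{T'}(w)\geq\max\{d_{T'}(w,x),d_{T'}(w,v)\}$, which suffices.
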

\begin{figure}[h]
	\centering
	\includegraphics[width=13cm]{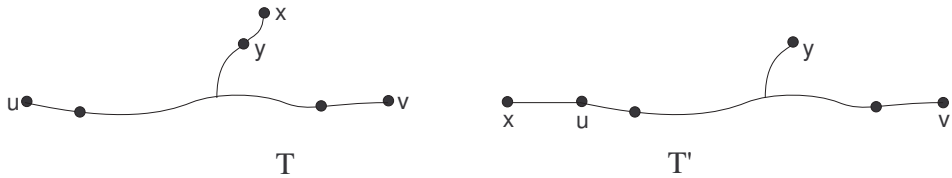}
	\caption{The trees $T$ and $T'$ constructed in Lemma~\ref{lemma 1}.}\label{fig lemma 1}
\end{figure}
\begin{proof}
We note that $(x,v)$-path is a diametrical path in $T'$ and
\begin{eqnarray}\label{l-1}
e_{T'}(x) = e_T(u) + 1 = {\rm diam}(T)+1 > e_{T}(x).
\end{eqnarray}
By the construction of $T'$, we have
\begin{eqnarray}
\label{l-2} d_{T'}(w,v) = d_T(w,v) & & \forall~ w\in V(T)\setminus \{x\}\\
\label{l-2-2} d_{T'}(w,x) = d_T(w,u) + 1 & & \forall~ w\in V(T)\setminus \{x\}.
\end{eqnarray}
By Proposition~\ref{eccentric-3}, we have
\begin{eqnarray}
\label{l-3} e_T(w) = \max\{d_T(w,u), d_T(w,v)\} && \forall~ w\in V(T) \\
\label{l-4} e_{T'}(w) = \max\{d_{T'}(w,x), d_{T'}(w,v)\} && \forall~ w\in V(T).
\end{eqnarray}
Thus for each $w \in V(T) \setminus \{x\}$, equations~\eqref{l-2}-\eqref{l-4} imply
\begin{eqnarray}
\nonumber e_{T'}(w) &=& \max\{d_{T'}(w,x), d_{T'}(w,v)\} = \max\{d_{T}(w,u)+1, d_{T}(w,v)\}\\
\label{l-5} &\geq& \max\{d_{T}(w,u), d_{T}(w,v)\} = e_T(w).
\end{eqnarray}
Thus, from inequalities~\eqref{l-1}-\eqref{l-5}, we obtain
\begin{eqnarray*}
\tau(T') &=& \sum_{w\in V(T)\setminus \{x\}} e_{T'}(w) + e_{T'}(x)\\
& > &  \sum_{w\in V(T)\setminus \{x\}} e_{T}(w) + e_{T}(x)\\
& = & \tau(T).
\end{eqnarray*}
This completes the proof.
\end{proof}
Now we find the trees with minimal and maximal total-eccentricity index among the class of $n$-vertex trees.
We device an algorithm to reduce a given tree into a path.
Let $T$ be an $n$-vertex tree, $n\geq 4$ and let $u,v\in V(T)$ be the end-vertices of a diametrical path in $T$. Define
\begin{equation}\label{t1}
A_{u,v} = \{xy\in E(T) ~|~ d_T(x) = 1, x\in V(T) \setminus \{u,v\}\}.
\end{equation}
\centerline{\textbf{Algorithm 1}}
\centerline{
			\begin{tabular}{clc}
				input: & An $n$-vertex tree $T$, $n\geq 4$. & \\
				output: & The tree $P_n$. & \\
				Step 0: & Take a diametrical $(u,v)$-path in $T$ and define $A_{u,v}$ by \eqref{t1}.& \\
				Step 1: & If $A_{u,v} = \emptyset$ then Stop. & \\
				Step 2: & For an $xy\in A_{u,v}$ define $T:= \{T-\{xy\}\} \cup \{ux\}$. Set $u:=x$ and update $A_{u,v}$ by \eqref{t1};&\\
				& go to Step 1. &
\end{tabular}}
\vspace{.3cm}

Next, we discuss the termination and correctness of Algorithm 1.
When the algorithm goes from Step A to Step B, we will use the notation [Step A $\rightarrow$ Step B].
\begin{theorem}[Termination]
\label{termination}
The Algorithm 1 terminates after a finite number of iterations.
\end{theorem}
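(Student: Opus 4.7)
The plan is to use the diameter of the current tree as a strictly increasing, bounded-above monovariant, with Lemma~\ref{lemma 1} supplying the monotonicity. Specifically, equation~\eqref{l-1} in the proof of that lemma shows that one application of the replacement in Step~2 increases the diameter by exactly one and produces $(x,v)$ as a diametrical path of the new tree.

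First I would formalize a loop invariant: whenever the algorithm enters Step~1, the stored pair $(u,v)$ consists of the endpoints of a diametrical path in the current tree, and $A_{u,v}$ is defined relative to this tree. The invariant holds at Step~0 by construction. Assuming it holds at Step~1 and the algorithm proceeds to Step~2 with some $xy\in A_{u,v}$, the definition of $A_{u,v}$ forces $x$ to be a pendant with $x\notin\{u,v\}$; moreover, the neighbour $y$ cannot coincide with $u$ or $v$, since otherwise $d_T(x,v)={\rm diam}(T)+1$ or $d_T(x,u)={\rm diam}(T)+1$, contradicting the definition of the diameter. Lemma~\ref{lemma 1} therefore applies verbatim, and after the reassignment $u:=x$ the pair $(u,v)$ again labels the endpoints of a diametrical path in the updated tree, restoring the invariant.

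Second, I would close the argument by a monovariant. Every $n$-vertex tree has diameter at most $n-1$, and by the preceding paragraph the diameter strictly increases by exactly one on each pass through Step~2. Consequently, Step~2 can be executed at most $n-1-{\rm diam}(T_0)$ times, where $T_0$ denotes the input tree. Hence the algorithm must reach Step~1 with $A_{u,v}=\emptyset$ after finitely many iterations and halt.

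I do not anticipate a substantial obstacle. The only care needed lies in verifying that the loop invariant is preserved and that Lemma~\ref{lemma 1} is applicable at each iteration (in particular, the observation that $y\notin\{u,v\}$ is forced); once these points are in place, termination falls out of a one-line bounded monovariant argument.
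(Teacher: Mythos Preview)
Your proposal is correct, but it takes a genuinely different route from the paper. The paper argues directly on the set $A_{u,v}$: the pendant edge $xy$ selected in Step~2 is deleted from the tree, so it can never reappear in $A_{u,v}$ in later iterations; since (implicitly) the newly created edges $xu$ lie on the growing diametrical path and hence are never pendant away from $\{u,v\}$, every processed edge is one of the original $n-1$ edges, giving the bound of $n-1$ iterations. Your argument instead uses the diameter as a strictly increasing monovariant, bounded above by $n-1$, with Lemma~\ref{lemma 1} (specifically the observation that the $(x,v)$-path is diametrical in $T'$ with length ${\rm diam}(T)+1$) supplying the increment and simultaneously restoring the loop invariant. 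Your approach has the advantage of yielding the sharper bound $n-1-{\rm diam}(T_0)$ and of making the invariant on $(u,v)$ explicit; the paper's approach is more self-contained in that it does not invoke Lemma~\ref{lemma 1}. One minor remark: your check that $y\notin\{u,v\}$ is not formally required to invoke Lemma~\ref{lemma 1} (the lemma's hypotheses do not mention $y$), though it is a reasonable sanity check ensuring the transformation is nontrivial, and your justification for it is correct.
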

\begin{proof}
Initially $A_{u,v}$ is defined at Step 0 and modified at Step 2 in each iteration.
Modification of $A_{u,v}$ at Step 2 implies that if a pendant edge is removed from $A_{u,v}$, it will not appear again in $A_{u,v}$ in the subsequent iterations of Algorithm 1.
Thus [Step 2 $\rightarrow$ Step 1] is executed at most $n-1$ times.
Hence Algorithm 1 terminates after a finite number of iterations.
\end{proof}
\begin{theorem}[Correctness]
\label{correctness}
If Algorithm 1 terminates then it outputs $P_n$.
\end{theorem}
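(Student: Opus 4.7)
The plan is to establish and maintain a loop invariant: at the start of every execution of Step~1, the current object $T$ is an $n$-vertex tree and the current pair $u,v$ are the endpoints of a diametrical path of $T$. Granting this invariant, the termination condition $A_{u,v}=\emptyset$ forces $T$ to be a path on $n$ vertices via a short pendant-count argument.

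First I would verify the invariant. At Step~0 it holds by hypothesis. For the inductive step, suppose we arrive at Step~2 with $T$ a tree on $n$ vertices having a diametrical $(u,v)$-path, and pick $xy\in A_{u,v}$. By \eqref{t1}, the vertex $x$ is a pendant of $T$ distinct from $u$ and $v$, with unique neighbour $y$. Deleting the pendant edge $xy$ isolates $x$ and leaves the rest connected; re-attaching $x$ via $ux$ restores connectivity while keeping the graph acyclic, so the resulting graph is again an $n$-vertex tree. Moreover, Lemma~\ref{lemma 1} establishes that $(x,v)$ is a diametrical path in this new tree. Since Step~2 then reassigns $u:=x$, the invariant is preserved.

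Second, I would exploit the termination condition. If Algorithm~1 halts, it halts at Step~1 with $A_{u,v}=\emptyset$. By \eqref{t1} this means $T$ contains no pendant vertex outside $\{u,v\}$. Since every tree on at least two vertices has at least two pendant vertices, and $u\neq v$ because they are the endpoints of a diametrical path of length ${\rm diam}(T)\geq 1$ (guaranteed since $n\geq 4$), the pendant vertices of $T$ are exactly $u$ and $v$. A tree whose only pendant vertices are two given vertices is the path between them, so $T$ is the path with endpoints $u,v$; having $n$ vertices, it is $P_n$.

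The only delicate point is the preservation of the diametrical-path half of the invariant under Step~2, but this is exactly the configuration analyzed in Lemma~\ref{lemma 1}, so no additional work beyond citing that lemma is needed.
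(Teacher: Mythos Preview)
Your proof is correct and follows essentially the same line as the paper's: after each execution of Step~2 the object remains an $n$-vertex tree, and the termination condition $A_{u,v}=\emptyset$ forces exactly two pendant vertices, hence $T=P_n$. Your explicit maintenance of the diametrical-path invariant adds rigor that the paper's version elides; just note that the fact ``$(x,v)$ is diametrical in $T'$'' appears in the \emph{proof} of Lemma~\ref{lemma 1} rather than in its statement.
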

\begin{proof}
We can obviously see that after the execution of Step $2$ in any iteration of Algorithm 1, the modified graph $T$ at Step 2 is again an $n$-vertex tree.
Also, by definition of $A_{u,v}$, we see that $T$ has exactly two pendant vertices when $A_{u,v} = \emptyset$.
Thus, when Algorithm 1 terminates at Step 1, the tree $T$ has exactly two pendant vertices, that is, $T= P_n$.
\end{proof}
Using Lemma~\ref{lemma 1} and Algorithm 1, we prove the following theorem.
\begin{theorem}\label{tau max tree}
Let $n\geq 4$. Then among the $n$-vertex trees, the path $P_n$ has the maximal total-eccentricity index.
Thus for any $n$-vertex tree $T$, we have $\tau(T)\leq \tau(P_n)$.
\end{theorem}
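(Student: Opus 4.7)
The plan is to derive the theorem as a direct corollary of Lemma~\ref{lemma 1} together with the termination and correctness of Algorithm 1. Given an arbitrary $n$-vertex tree $T$ with $n\geq 4$, I would simply feed $T$ into Algorithm 1. If $T\cong P_n$, the inequality $\tau(T)\leq\tau(P_n)$ is an equality and there is nothing to prove. Otherwise, $T$ contains at least three pendant vertices, so for any diametrical $(u,v)$-path at least one pendant vertex $x\notin\{u,v\}$ exists, giving $A_{u,v}\neq\emptyset$.

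At each execution of Step 2, the algorithm selects an edge $xy\in A_{u,v}$ and replaces the current tree $T$ by $T':=\{T-\{xy\}\}\cup\{ux\}$. This is precisely the construction of Lemma~\ref{lemma 1}, applied to the present tree together with the current diametrical $(u,v)$-path; hence Lemma~\ref{lemma 1} yields $\tau(T)<\tau(T')$. Writing $T=T_0,T_1,\ldots,T_k$ for the sequence of trees produced by successive passes through Step 2, one therefore obtains
\begin{equation*}
\tau(T_0)<\tau(T_1)<\cdots<\tau(T_k),
\end{equation*}
which realises the chain anticipated in~\eqref{(1)}. Theorem~\ref{termination} guarantees that this sequence is finite, and Theorem~\ref{correctness} identifies the terminal tree as $T_k=P_n$. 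Combining these yields $\tau(T)<\tau(P_n)$ when $T\ncong P_n$, and the theorem follows.

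The one point that needs a brief verification is that Lemma~\ref{lemma 1} is legitimately applicable at \emph{every} iteration, not merely the first. The lemma requires $u,v$ to be the endpoints of a diametrical path in the current tree and $x\neq u,v$ to be a pendant vertex; the definition of $A_{u,v}$ in~\eqref{t1} secures the latter, while the former is preserved by the update $u:=x$ in Step 2 because equation~\eqref{l-1} inside the proof of Lemma~\ref{lemma 1} shows that $(x,v)$ is a diametrical path in the new tree $T'$. Thus the invariant ``$u,v$ are endpoints of a diametrical path'' is maintained throughout, no step of Algorithm 1 invokes Lemma~\ref{lemma 1} outside its hypotheses, and the chain of strict inequalities is justified. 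I expect no substantive obstacle beyond this bookkeeping step, since all the analytic content has already been packaged into Lemma~\ref{lemma 1} and the two preceding theorems on Algorithm 1.
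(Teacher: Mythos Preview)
Your argument is correct and follows essentially the same route as the paper: invoke Lemma~\ref{lemma 1} at each pass through Step~2 of Algorithm~1, then appeal to Theorems~\ref{termination} and~\ref{correctness} to conclude that the strictly increasing chain terminates at $P_n$. Your additional care in checking that the diametrical-path hypothesis of Lemma~\ref{lemma 1} is maintained after the update $u:=x$ is a detail the paper leaves implicit, but otherwise the proofs coincide.
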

\begin{proof}
Let $T\ncong P_n$ be an $n$-vertex tree.
By Lemma~\ref{lemma 1}, the total-eccentricity index of the modified tree $T$ strictly increases at Step 2 in each iteration of Algorithm 1.
The Algorithm 1 terminates when $T \cong P_n$.
This shows that $P_n$ has the maximal total-eccentricity index.
\end{proof}
\begin{corollary}\label{tau_path}
Let $T$ be a tree on $n$ vertices, then 
\begin{equation}
\tau(T)\leq \frac{3n^2}{4} - \frac{n}{2},
\end{equation}
where equality holds when $T$ is a path on $n$ vertices and $n\equiv 0 (\bmod 2)$.
\end{corollary}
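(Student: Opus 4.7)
The plan is essentially a two-line deduction combining the two ingredients already established in the excerpt: the extremality of $P_n$ among $n$-vertex trees from Theorem~\ref{tau max tree}, and the closed-form formula for $\tau(P_n)$ given in~\eqref{tau Pn}.

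First, I invoke Theorem~\ref{tau max tree} to obtain $\tau(T)\leq \tau(P_n)$ for an arbitrary $n$-vertex tree $T$. Then I split on the parity of $n$ and substitute~\eqref{tau Pn}. If $n$ is even, $\tau(P_n)=\frac{3n^2}{4}-\frac{n}{2}$, which gives the bound directly and shows that equality is attained exactly when $T\cong P_n$. If $n$ is odd, $\tau(P_n)=\frac{3n^2}{4}-\frac{n}{2}-\frac{1}{4}$, which is strictly less than $\frac{3n^2}{4}-\frac{n}{2}$, so the stated bound holds strictly in this case.

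There is no real obstacle here, since the work was done in Lemma~\ref{lemma 1}, Algorithm~1 and Theorem~\ref{tau max tree}; the corollary only packages the universal bound in a single parity-free expression and notes that the inequality is tight precisely in the even case. The only minor drafting choice is whether to present the two parities as separate displayed cases or merge them into one chain of inequalities; I would opt for a compact two-case argument so that the sharpness claim for even $n$ is visibly isolated.
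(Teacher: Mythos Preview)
Your proposal is correct and follows exactly the paper's approach: the paper's proof is the single line ``The result follows by using Theorem~\ref{tau max tree} and equation~\eqref{tau Pn},'' and you have simply spelled out the parity split that this sentence implicitly contains. There is nothing to add or change.
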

\begin{proof}
The result follows by using Theorem~\ref{tau max tree} and equation~\eqref{tau Pn}.
\end{proof}
By Lemma~\ref{lemma 1}, we see that when the Algorithm 1 goes from Step 2 to Step 1, the total-eccentricity index of the modified tree strictly increases.
Thus, if the Algorithm 1 terminates after $k$ iterations, it generates a sequence of trees $T_1,T_2,\ldots, T_k$, of same order $n$ satisfying~\eqref{(1)}.
\begin{example}
Consider a tree $T$ of order $9$ shown in Figure~\ref{exm-algo-1}.
The Algorithm~1 will generate a sequence of trees $T_1,T_2,T_3 \cong P_{9}$ such that $\tau(T)<\tau(T_1) < \tau(T_2) < \tau(T_3) = \tau(P_{9})$. We remark that this sequence is not unique.
The modification of the tree at Step 2 depends upon the choice of pendant edge $xy$.
\end{example}
\begin{figure}[h!]
\centering
\includegraphics[width=12cm]{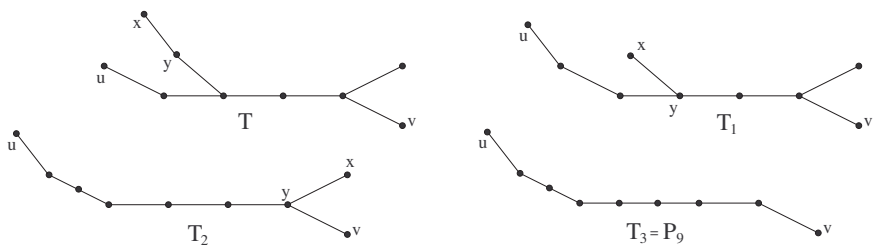}
\caption{Sequence of trees generated by Algorithm 1 at Step 2 in each iteration.}\label{exm-algo-1}
\end{figure}
Next we propose an algorithm to reduce a given tree into a star.
Let $T$ be an $n$-vertex tree and $c\in V(T)$ with $e_T(c) = {\rm rad}(T)$. We define $A_r$ by
\begin{equation}\label{t2}
A_r = \{xy\in E(T) ~|~ d_T(x,c) = {\rm rad}(T)\}.
\end{equation}
\centerline{\textbf{Algorithm 2}}
\centerline{
	\begin{tabular}{clc}
	input: & An $n$-vertex tree $T$, $n\geq 4$. & \\
	output: & The tree $S_n$. & \\
	Step 0: & Find ${\rm rad}(T)$ by \eqref{rad}, a vertex $c\in V(T)$ with $e_T(c) = {\rm rad}(T)$ and &\\
	& define $A_r$ by \eqref{t2}.& \\
	Step 1: & If ${\rm rad}(T) = 1$, then Stop. & \\
	Step 2: & For an edge $xy\in A_r$, define $T := \{T - \{yx\}\}\cup \{cx\}$ and $A_r:= A_r \setminus \{xy\}$. & \\
	Step 3: & If $A_r\neq \emptyset$ then go to Step 2; else define ${\rm rad}(T)$ by \eqref{rad} and $A_r$ by \eqref{t2}; go to Step 1. &
\end{tabular}}
\vspace{.3cm}

Next, we discuss the correctness and termination of the Algorithm 2.
\begin{theorem}[Termination]
\label{termination 2}
The Algorithm 2 terminates after a finite number of iterations.
\end{theorem}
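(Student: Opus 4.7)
My plan is to prove termination by combining two monotonicity observations: the inner loop (cycling Step 2 $\rightarrow$ Step 3 $\rightarrow$ Step 2) is finite because $|A_r|$ strictly decreases each pass, and the outer loop (returning to Step 1 via Step 3) is finite because each full pass strictly decreases ${\rm rad}(T)$.

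First I would establish the following geometric fact, which is the engine of the argument: every vertex $x$ with $d_T(x,c)={\rm rad}(T)$ is a pendant vertex in $T$. Indeed, any neighbor of such an $x$ other than its predecessor on the $(c,x)$-path would be a vertex at distance ${\rm rad}(T)+1$ from $c$, contradicting $e_T(c)={\rm rad}(T)$. Consequently, for every $xy \in A_r$, $x$ is a leaf and $y$ is its unique neighbor, with $d_T(y,c)={\rm rad}(T)-1$. This means the local surgery in Step 2 (delete $xy$, add $cx$) simply detaches the leaf $x$ from $y$ and reattaches it to $c$; the result is again a tree on the same vertex set, and the distance from $c$ to every vertex other than $x$ is unchanged, while $d(c,x)$ drops from ${\rm rad}(T)$ to $1$.

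Next I would bound the inner loop. During a single execution of the inner loop, the vertex $c$ and the integer ${\rm rad}(T)$ are frozen (they are not updated until Step 3 finds $A_r=\emptyset$). Moreover, by the preceding paragraph, modifying $T$ via an edge $xy\in A_r$ does not alter $d_T(x',c)$ for any other $x'$ with $x'y'\in A_r$, so the remaining edges of $A_r$ retain their defining property after the surgery. Therefore the inner loop simply processes each element of the finite initial set $A_r$ exactly once, and terminates in $|A_r|$ steps.

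Now for the outer loop. After one complete pass through the inner loop, every vertex that was at distance ${\rm rad}(T)$ from $c$ in the tree at the start of that pass is now at distance $1$ from $c$, while all other distances from $c$ are unchanged. Hence in the updated tree $T'$ we have $e_{T'}(c)\le {\rm rad}(T)-1$, and so ${\rm rad}(T')\le e_{T'}(c)\le {\rm rad}(T)-1$, a strict decrease. Since ${\rm rad}(T)$ is a positive integer throughout the execution, the outer loop can be entered at most ${\rm rad}(T_0)-1$ times, where $T_0$ is the input tree. When ${\rm rad}(T)=1$ the test in Step 1 halts the algorithm. The main subtlety, which I singled out above, is verifying that the surgeries carried out during an inner pass are mutually noninterfering, so that ${\rm rad}(T)$ and the set $A_r$ computed at the start of an outer iteration accurately describe what the inner loop will do; the pendant observation makes this immediate.
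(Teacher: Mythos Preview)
Your proof is correct and follows the same two-level monotonicity scheme as the paper: the inner loop terminates because $|A_r|$ strictly decreases at each execution of Step~2, and the outer loop terminates because ${\rm rad}(T)$ strictly decreases each time control returns to Step~1. The paper's own proof is very terse and simply asserts these two facts; you go further by supplying the pendant-vertex observation, which justifies why the surgeries are non-interfering and why $e_{T'}(c)\le{\rm rad}(T)-1$ actually holds after a full inner pass. So the route is the same, but your version fills in the details the paper leaves implicit.
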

\begin{proof}
Note that initially ${\rm rad}(T)$ and $A_r$ are defined at Step 0. The set $A_r$ reduces at Step 2. If [Step 3 $\rightarrow$ Step 2] is executed then $A_r$ reduces. Thus [Step 3 $\rightarrow$ Step 2] can be executed for a finite number of times. 
If $A_r = \emptyset$ at Step 3 then $r$ decreases at Step 3. Thus [Step 3 $\rightarrow$ Step 1] can be executed for a finite number of times. Therefore the algorithm will terminate after a finite number of iterations.
\end{proof}
\begin{theorem}[Correctness]
\label{correctness 2}
If Algorithm 2 terminates then it outputs $S_n$.
\end{theorem}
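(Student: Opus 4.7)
The plan is to prove two things: first, that each execution of Step 2 preserves the property that $T$ is an $n$-vertex tree, and second, that any $n$-vertex tree with radius equal to $1$ is necessarily $S_n$. Since Step 1 halts the algorithm precisely when ${\rm rad}(T)=1$, combining these two facts yields the conclusion.

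The invariant rests on the observation that whenever an edge $xy\in A_r$ is selected in Step 2, the vertex $x$, which satisfies $d_T(x,c)=r={\rm rad}(T)$, must be a pendant of the current tree. Any neighbour $w$ of $x$ in a tree satisfies $d_T(w,c)=d_T(x,c)\pm 1$, so $d_T(w,c)\in\{r-1,r+1\}$; the value $r+1$ contradicts $e_T(c)=r$, while two distinct neighbours both at distance $r-1$ would yield two distinct $(c,x)$-paths of length $r$, impossible in a tree. Hence $x$ has the single neighbour $y$ with $d_T(y,c)=r-1$, so removing the pendant edge $xy$ and inserting $cx$ simply re-attaches the pendant $x$ from $y$ to $c$, yielding an $n$-vertex tree again. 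Because relocating a pendant leaves the distance from $c$ to every other vertex unchanged, one continues to have $e_T(c)=r$ and $d_T(x',c)=r$ for every edge $x'y'$ still lying in $A_r$, so the same pendant argument applies repeatedly throughout the inner loop of Steps 2--3 until $A_r$ is exhausted.

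Finally, suppose the algorithm halts at Step 1 with ${\rm rad}(T)=1$. Then some vertex $c$ satisfies $e_T(c)=1$, so every other vertex of $T$ is adjacent to $c$; combined with the fact that $T$ is an $n$-vertex tree (by the invariant above), this forces $c$ to have degree $n-1$ and the remaining $n-1$ vertices to be pendants, so $T\cong S_n$. The main obstacle I expect is the bookkeeping in the inner loop: because $c$ and $r$ are refreshed only when control returns from Step 3 to Step 1, one must justify carefully that the distances $d_T(x',c)$ used in each pass of Step 2 retain the value $r$ fixed at the last invocation of Step 0 or Step 1. Once this is ensured, the pendant-vertex argument propagates through the loop and the terminal characterization closes the proof.
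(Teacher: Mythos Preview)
Your proof is correct and follows the same line as the paper's own argument, which simply notes that at termination ${\rm rad}(T)=1$ and $c$ remains central, so $d_T(c,x)=1$ for all $x\neq c$ and hence $T\cong S_n$. Your version is considerably more detailed --- in particular you supply the pendant-vertex justification and the tree-preservation invariant that the paper leaves implicit --- but the underlying approach is the same.
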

\begin{proof}
When Algorithm 2 terminates at Step 1 then ${\rm rad}(T) = 1$ and $c$ remains the central vertex of $T$, that is, $d_T(c,x) = 1$ for all $x\in V(T) \setminus \{c\}$. This shows that $T \cong S_n$.
\end{proof}
The following theorem gives trees with minimal total-eccentricity index.
\begin{theorem}\label{tau min tree}
Among all $n$-vertex trees with $n\geq 4$, the star $S_n$ has minimal total-eccentricity index.
\end{theorem}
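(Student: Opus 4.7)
The plan is to mirror the argument of Theorem \ref{tau max tree}, with Algorithm 2 in place of Algorithm 1. The main ingredient will be an analog of Lemma \ref{lemma 1}: the operation performed at Step 2 of Algorithm 2 weakly decreases the total-eccentricity index. Once this is in hand, Theorems \ref{correctness 2} and \ref{termination 2} yield a finite sequence $T = T_0, T_1, \ldots, T_k \cong S_n$ of $n$-vertex trees along which $\tau$ is non-increasing, giving $\tau(S_n) \leq \tau(T)$.

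Concretely, let $T \ncong S_n$, let $c$ be a central vertex of $T$, and let $xy \in A_r$, so that $d_T(x,c) = {\rm rad}(T)$. First I would observe that $x$ must be a pendant: any neighbour $z$ of $x$ other than $y$ would give $d_T(z,c) = {\rm rad}(T)+1$, contradicting $e_T(c) = {\rm rad}(T)$. Set $T' = (T - xy) \cup \{cx\}$ and $H = T - x = T' - x$. Since all pairwise distances among vertices of $V(H)$ agree in $T$, $T'$, and $H$, and $x$ is pendant adjacent to $y$ in $T$ and to $c$ in $T'$, a direct computation gives
\begin{align*}
\tau(T) &= \bigl(e_H(y)+1\bigr) + \sum_{w \in V(H)} \max\{e_H(w),\; d_H(w,y)+1\}, \\
\tau(T') &= \bigl(e_H(c)+1\bigr) + \sum_{w \in V(H)} \max\{e_H(w),\; d_H(w,c)+1\}.
\end{align*}

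From here, $\tau(T) \geq \tau(T')$ reduces to two claims. First, $e_H(y) \geq e_H(c)$: since $T \ncong S_n$ forces ${\rm rad}(T) \geq 2$, one has $e_T(y) \geq e_T(c) = {\rm rad}(T) \geq 2 > 1 = d_T(y,x)$, so the eccentric vertex of $y$ in $T$ lies in $V(H)$, yielding $e_H(y) = e_T(y) \geq e_T(c) \geq e_H(c)$. Second, for each $w \in V(H)$ the pointwise bound $\max\{e_H(w), d_H(w,y)+1\} \geq \max\{e_H(w), d_H(w,c)+1\}$ holds. This is immediate when $d_H(w,y) \geq d_H(w,c)$; otherwise $y$ lies on the unique $(w,c)$-path in $H$, forcing $d_H(w,c) = d_H(w,y) + ({\rm rad}(T)-1)$, which with $d_H(w,c) \leq {\rm rad}(T)$ gives $d_H(w,y) \leq 1$. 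Thus $w = y$ or $w$ is a neighbour of $y$ on the far side of $y$ from $c$; in the first subcase $e_H(y) \geq {\rm rad}(T)$ was shown above, and in the second it suffices to pick a vertex $z$ in a branch of $c$ other than the one containing $y$, giving $d_H(w,z) = d_H(w,c) + d_H(c,z) \geq {\rm rad}(T)+1$, whence $e_H(w) \geq d_H(w,c)+1$.

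The main technical point is the existence of such a witness $z$ in the second subcase: a central vertex of degree one would have a neighbour of strictly smaller eccentricity, contradicting the minimality of ${\rm rad}(T)$, so in any non-star tree $c$ has degree at least two, providing the required $z$. Combining the two claims proves $\tau(T') \leq \tau(T)$, and applying this lemma repeatedly along Algorithm 2 yields the theorem.
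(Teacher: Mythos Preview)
Your overall strategy is sound, and it differs from the paper's proof (which performs the entire batch of moves in $A_r$ at once and observes directly that the radius drops by one, rather than analyzing a single edge move). However, your Case B argument contains a genuine gap.

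The implication ``$d_H(w,y) < d_H(w,c)$ forces $y$ to lie on the $(w,c)$-path'' is false. Take $T = P_9$ with vertices $v_1,\ldots,v_9$, centre $c=v_5$, and $x=v_9$, $y=v_8$; then for $w=v_7$ one has $d_H(w,y)=1<2=d_H(w,c)$, yet the $(w,c)$-path is $v_7v_6v_5$ and does not contain $y$. In a tree the correct relation is $d_H(w,c)=d_H(w,m)+d_H(m,c)$ and $d_H(w,y)=d_H(w,m)+d_H(m,y)$ where $m$ is the median of $\{w,c,y\}$, so $d_H(w,y)<d_H(w,c)$ only gives $d_H(m,y)<d_H(m,c)$, not $m=y$. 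Consequently the equation $d_H(w,c)=d_H(w,y)+(\mathrm{rad}(T)-1)$ and the ensuing subcase split are unjustified.

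The fix is actually simpler than your case analysis and uses an observation you already made. Since $\mathrm{rad}(T)\ge 2$ and $x$ is not adjacent to $c$, the vertex $c$ has degree at least $2$ in $H$; but in any tree every eccentric vertex of any $w$ is a leaf (a non-leaf at maximal distance would have a neighbour one step farther). Hence $c$ is never an eccentric vertex in $H$, so $e_H(w)\ge d_H(w,c)+1$ for every $w\in V(H)$. This immediately gives $\max\{e_H(w),d_H(w,c)+1\}=e_H(w)\le \max\{e_H(w),d_H(w,y)+1\}$ for all $w$, and your Claim~1 handles the $x$-term. With this correction the single-step monotonicity holds and your conclusion $\tau(S_n)\le\tau(T)$ follows via Algorithm~2.
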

\begin{proof}
Let $T\ncong S_n$ be an $n$-vertex tree, $n\geq 4$ and let $c$ be a central vertex of $T$.
Define $A_r$ by \eqref{t2}.
We construct a new set of edges not in $E(T)$ by
\begin{equation*}
\tilde{A}_r = \{cx ~|~ x\in V(T) {\rm ~with~} d_T(x,c) = {\rm rad}(T)\}
\end{equation*}
and define a tree $T'$ by
\begin{eqnarray*}
T' \cong \{T - A_r\} \cup \tilde{A}_r.
\end{eqnarray*}
Then we note that ${\rm rad}(T') = {\rm rad}(T) - 1$. Thus
by the construction of $T'$, we observe that 
\begin{equation}\label{t3}
e_{T'}(u) \leq e_{T}(u) \quad \forall~ u\in V(T).
\end{equation}
Moreover, $c$ is a central vertex of $T'$, that is,
\begin{equation}\label{t4}
e_{T'}(c) = {\rm rad}(T)-1 < e_{T}(c).
\end{equation}
From \eqref{t3} and \eqref{t4}, we obtain
\begin{equation*}
\tau(T') < \tau(T).
\end{equation*}
In fact, if $T$ is a tree at Step 1 with ${\rm rad}(T) > 1$ in any iteration of Algorithm 2, then $T'$ is a tree at Step 3 when $A_r = \emptyset$.
Thus when [Step 3 $\rightarrow$ Step 1] is executed, the total-eccentricity index strictly decreases. Since Algorithm 2 outputs $S_n$, we have $\tau(S_n) < \tau(T)$.
Thus the assertion holds.
\end{proof}
\begin{corollary}
For an $n$-vertex tree $T$, we have
\begin{eqnarray}\label{tau star}
\tau(T) \geq 2n - 1.
\end{eqnarray}

\end{corollary}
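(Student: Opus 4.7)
The plan here is essentially a one-line reduction to two results already established in the paper. The strategy is to combine Theorem~\ref{tau min tree}, which identifies $S_n$ as the minimizer of the total-eccentricity index among all $n$-vertex trees, with the explicit closed form $\tau(S_n) = 2n - 1$ recorded in equation~\eqref{tau Sn}.

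Concretely, I would first invoke Theorem~\ref{tau min tree} to obtain $\tau(T) \geq \tau(S_n)$ for any $n$-vertex tree $T$ with $n \geq 4$. Then a direct substitution using equation~\eqref{tau Sn} converts this into $\tau(T) \geq 2n-1$, which is precisely the claim. The small cases $n \leq 3$ either fall outside the regime of Theorem~\ref{tau min tree} or collapse (for $n=3$ the only tree is $P_3 \cong S_3$, and $\tau(P_3)=5 = 2\cdot 3 -1$, so equality holds trivially).

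Since both ingredients are already in hand, there is no genuine obstacle in the argument; the only thing worth flagging is the equality case. Because Theorem~\ref{tau min tree} actually shows that for $T \ncong S_n$ the inequality $\tau(S_n) < \tau(T)$ is strict (the Algorithm~2 reduction strictly decreases $\tau$ at every iteration until the star is reached), equality in \eqref{tau star} is achieved if and only if $T \cong S_n$. I would mention this sharpness explicitly, either inside the proof or as a remark following the corollary.
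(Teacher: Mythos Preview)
Your proposal is correct and matches the paper's proof essentially verbatim: the paper simply writes ``By Theorem~\ref{tau min tree} and equation~\eqref{tau Sn}, the proof is obvious.'' Your additional remarks on the small cases and the equality characterization go slightly beyond what the paper records, but the core argument is identical.
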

\begin{proof}
By Theorem~\ref{tau min tree} and equation~\eqref{tau Sn}, the proof is obvious.
\end{proof}
From the proof of Theorem~\ref{tau min tree}, we note that when [Step 3 $\rightarrow$ Step 1] is executed, the total-eccentricity index of the modified tree strictly decreases.
Thus, if the Algorithm 2 terminates after $l$ iterations, it generates a sequence of trees $T_1',T_2',\ldots, T_l'$, of same order $n$ satisfying~\eqref{(2)}.
\begin{example}
Consider a tree $T$ of order $14$ shown in Figure~\ref{exm-algo-2}.
The Algorithm~2 will generate a sequence of trees $T_1',T_2',T_3' \cong S_{14}$ such that $\tau(T)>\tau(T_1') > \tau(T_2') > \tau(T_3') = \tau(S_{14})$.
We remark that this sequence is not unique.
The modification of the tree at Step 2 depends upon the choice of pendant edge $xy$.
The step-wise procedure is explained in Figure~\ref{exm-algo-2}.
\begin{figure}[h!]
\centering
\includegraphics[width=14cm]{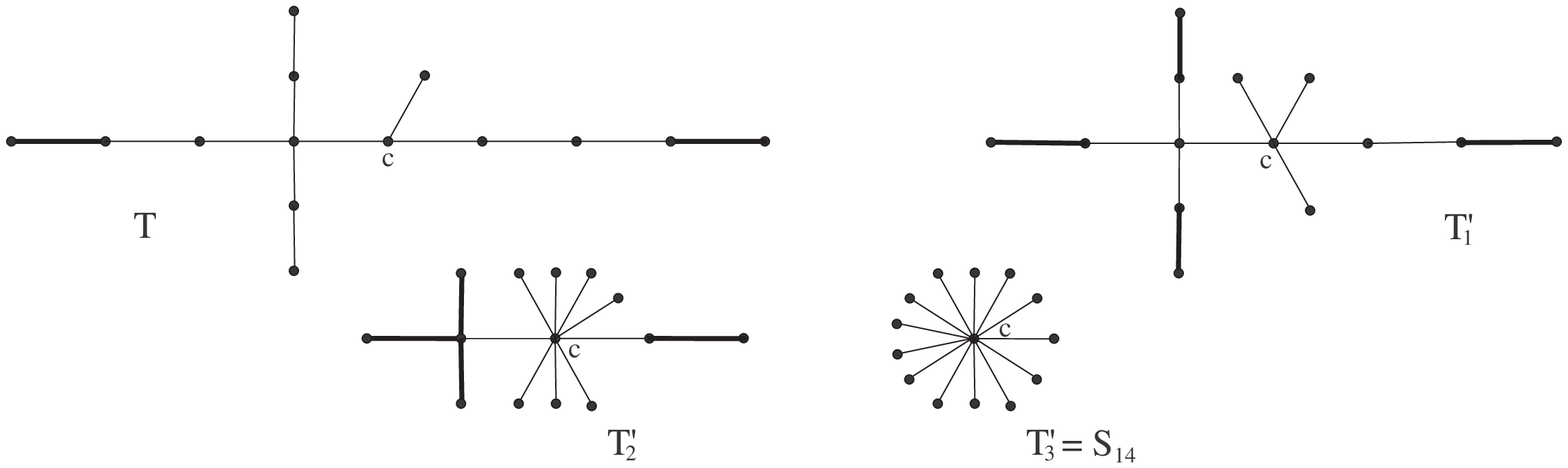}
\caption{Sequence of trees generated by Algorithm 2 when [Step 2 $\rightarrow$ Step 1] is executed.}\label{exm-algo-2}
\end{figure}
\end{example}
\section{Extremal unicyclic and bicyclic graphs with respect to total-eccentricity index}

In this section, we find graphs with minimal and maximal total-eccentricity index among the $n$-vertex unicyclic and bicyclic graphs.
In the next theorem, we find the graph with minimal total-eccentricity index among all unicyclic graphs.

\begin{theorem}\label{tau min uni}
Among all $n$-vertex unicyclic graphs, $n\geq 4$, the graph $U_1$ shown in Figure~\ref{ext_uni} has the minimal total-eccentricity index.
\end{theorem}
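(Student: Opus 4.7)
The plan is to verify $\tau(U_1) = 2n-1$ directly and then prove $\tau(G) \geq 2n-1$ for every $n$-vertex unicyclic graph $G$ via a case split on whether $G$ contains a vertex of eccentricity $1$. From its role as the minimiser among unicyclic graphs, I expect $U_1$ to be the graph obtained from the star $S_n$ by joining two of its leaves by an edge, equivalently the triangle $C_3$ with $n-3$ pendant vertices attached to a single vertex of the triangle; I will proceed under this description.

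The first step is to compute $\tau(U_1)$. The unique vertex of degree $n-1$ has eccentricity $1$, and each of the remaining $n-1$ vertices has eccentricity $2$, so $\tau(U_1) = 1 + 2(n-1) = 2n-1$.

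For the lower bound, let $G$ be an arbitrary $n$-vertex unicyclic graph. If every vertex of $G$ has eccentricity at least $2$, then $\tau(G) \geq 2n > 2n-1$, and we are done. Otherwise some vertex $v \in V(G)$ satisfies $e_G(v) = 1$, meaning that $v$ is adjacent to all other $n-1$ vertices. This already accounts for $n-1$ of the $n$ edges of the unicyclic graph, so exactly one further edge lies inside $V(G) \setminus \{v\}$. By the symmetry of the attached leaves, this forces $G \cong U_1$, and hence $\tau(G) = 2n-1$.

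I do not anticipate a serious obstacle; the argument reduces to an edge count plus the observation that at most one vertex can have eccentricity $1$ in a unicyclic graph of order $n \geq 4$, since two such vertices would force at least $2n-3$ edges, exceeding the allowed $n$ for $n \geq 4$. Combining both cases gives $\tau(G) \geq 2n-1 = \tau(U_1)$, which establishes the theorem. The only mildly delicate point is justifying the isomorphism claim in the first case, which follows from the fact that the non-$v$ vertices are indistinguishable before the extra edge is placed.
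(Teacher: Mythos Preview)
Your proof is correct and follows essentially the same approach as the paper. The paper argues that if $U\ncong U_1$ then $e_U(u)\ge 2$ for all $u$, whence $\tau(U)\ge 2n>2n-1=\tau(U_1)$; you prove the contrapositive explicitly via the edge count, which is exactly the justification the paper leaves implicit.
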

\begin{proof}
Let $U \ncong U_1$ be an $n$-vertex unicyclic graph, $n\geq 4$. Then $e_U(u)\geq 2$ for each $u\in V(U)$. 
Note that $e_{U_1}(u)\leq 2$ for each $u\in V(U_1)$. Thus 
\begin{equation*}
\tau(U) \geq \tau(U_1).
\end{equation*}
The proof is complete.
\end{proof}
\begin{corollary}
For any unicyclic graph $U$, $\tau(U) \geq 2n - 1$.
\end{corollary}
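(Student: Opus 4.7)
The plan is to combine Theorem~\ref{tau min uni} with an explicit evaluation of $\tau(U_1)$. Since that theorem asserts that $U_1$ minimises $\tau$ over all $n$-vertex unicyclic graphs, the corollary reduces to verifying $\tau(U_1) = 2n-1$; the inequality $\tau(U)\ge \tau(U_1)$ then upgrades automatically to $\tau(U)\ge 2n-1$.

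To evaluate $\tau(U_1)$, I would read off from Figure~\ref{ext_uni} that $U_1$ is obtained from a triangle $C_3$ by attaching $n-3$ pendant vertices to one distinguished vertex $c$ of the triangle (this structure is consistent with the bound $e_{U_1}(u)\le 2$ invoked in the proof of Theorem~\ref{tau min uni}). Calling the other two triangle vertices $a$ and $b$, I would simply walk through the three vertex classes: the hub $c$ is adjacent to every other vertex, so $e_{U_1}(c)=1$; each of $a$ and $b$ is adjacent to $c$ and to the other triangle vertex, but reaches every pendant through $c$ in two steps, so $e_{U_1}(a)=e_{U_1}(b)=2$; and every pendant is adjacent only to $c$ and lies at distance $2$ from $a$, $b$ and every other pendant, so has eccentricity $2$.

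Adding these contributions gives
\[
\tau(U_1) \;=\; 1 + 2\cdot 2 + (n-3)\cdot 2 \;=\; 2n-1,
\]
and the corollary follows immediately by chaining with Theorem~\ref{tau min uni}. The step requiring care is only the correct identification of $U_1$ from the figure; once that is fixed, the eccentricity computations are routine since every distance in $U_1$ is at most $2$.
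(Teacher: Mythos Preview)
Your proposal is correct and follows exactly the paper's approach: the paper's proof simply states that ``after simple computation we see that $\tau(U_1)=2n-1$'' and then invokes Theorem~\ref{tau min uni}. You have merely made that computation explicit, and your identification of $U_1$ as a triangle with $n-3$ pendants at one vertex is the correct reading of Figure~\ref{ext_uni}.
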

\begin{proof}
After simple computation we see that
$\tau(U_1) = 2n - 1$. Thus the proof is obvious by using Theorem~\ref{tau min uni}.
\end{proof}
In the next theorem, we find an $n$-vertex unicyclic graph with maximal total-eccentricity index.
\begin{theorem}\label{tau max uni}
Among all $n$-vertex unicyclic graphs, $n\geq 4$, the graph $U_2$ shown in Figure~\ref{ext_uni} has maximal total-eccentricity index.
\end{theorem}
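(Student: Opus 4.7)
The plan is to follow the two-stage strategy used for trees in Section~2, adapted to accommodate the unique cycle of a unicyclic graph. I expect $U_2$ to be the ``tadpole'' graph obtained from a triangle $C_3$ by attaching a pendant path of length $n-3$ at one of its vertices, since this is the most path-like unicyclic graph and thus maximizes eccentricities, analogously to $P_n$ in the tree case.

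\textbf{Stage 1 (pendant shifting).} Let $U \not\cong U_2$ be an $n$-vertex unicyclic graph and $P$ a diametrical $(u,v)$-path in $U$. I would select a pendant edge $xy \in E(U)$ with $x \notin \{u,v\}$ that is not contained in the unique cycle, and form $U' = (U - \{xy\}) \cup \{ux\}$; since $xy$ is pendant and $u$ is not pendant, $U'$ is again unicyclic. Mimicking the proof of Lemma~\ref{lemma 1}, I would show $e_{U'}(x) = {\rm diam}(U) + 1 > e_U(x)$ and $e_{U'}(w) \geq e_U(w)$ for every $w \in V(U) \setminus \{x\}$, so $\tau(U') > \tau(U)$. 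This requires a unicyclic analogue of Lemma~\ref{eccentric-3} (that every vertex has an eccentric vertex among the endpoints of some diametrical path), which I would prove by splitting into the cases ``shortest paths to $u$ and $v$ avoid the cycle'' and ``they traverse part of the cycle'', the latter reduced to a tree by deleting one suitable cycle edge.

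\textbf{Stage 2 (cycle shrinking).} Iterating Stage 1 eventually produces a tadpole $T_{n,k}$ with cycle length $k = |C|$ and a single pendant path of length $n-k$. If $k \geq 4$, I would pick a cycle-vertex $z$ adjacent to the attachment vertex on the side opposite the pendant path, delete a cycle edge incident to $z$, and re-attach $z$ as a new pendant at the far end of the path. A direct closed-form eccentricity computation on tadpoles would then yield $\tau(T_{n,k-1}) > \tau(T_{n,k})$, because shrinking the cycle by one extends the longest path and the resulting increase in path-vertex eccentricities dominates the at-most-constant decrease on the remaining cycle-vertices. Iterating terminates at $T_{n,3} \cong U_2$, giving $\tau(U) < \tau(U_2)$.

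The main obstacle will be the analogue of Lemma~\ref{eccentric-3}: in a unicyclic graph two vertices may be joined by shortest paths that enter the cycle at one vertex and exit at another, so the clean tree-style argument from Cases 1--3 of that lemma must be reworked through case analysis on whether the relevant shortest paths use cycle-edges. Once this foundational lemma is established, the pendant-shifting estimate and the tadpole comparison in Stage 2 are both routine computations that parallel the tree proof.
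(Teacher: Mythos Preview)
Your two-stage plan is a natural extension of the tree argument, but it takes a substantially harder route than the paper and, more importantly, Stage~1 breaks down. The paper avoids working inside the unicyclic class altogether: given any unicyclic $U$, it deletes one cycle edge to obtain a spanning tree $T$ with $\tau(U)\le\tau(T)$, and then compares $T$ not to $P_n$ but to the specific tree $T_1=U_2-\{v_1v_2\}$ (a path $P_{n-1}$ with one extra leaf at an end), for which $\tau(T_1)=\tau(U_2)$. The comparison $\tau(T)\le\tau(T_1)$ is obtained by stripping a suitable pendant $u_1$ from $T$ (one whose removal leaves all other eccentricities unchanged), invoking Theorem~\ref{tau max tree} on the $(n-1)$-vertex trees, and adding back at most $n-2$ for the deleted leaf. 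No unicyclic analogue of Lemma~\ref{eccentric-3} is needed.

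The gap in your plan is that the analogue of Lemma~\ref{eccentric-3} is \emph{false} for unicyclic graphs, and with it the pointwise inequality $e_{U'}(w)\ge e_U(w)$ that your Stage~1 relies on. Take $C_6$ on vertices $1,\ldots,6$ with a pendant $7$ at $1$ and a pendant $8$ at $3$. The diameter is $4$; one diametrical pair is $(7,4)$. Vertex $6$ has $d(6,7)=d(6,4)=2$ but $e_U(6)=d(6,8)=4$, so neither diametrical endpoint is eccentric for $6$. Now perform your move: detach $x=8$ from $y=3$ and attach it to $u=7$. In $U'$ one computes $e_{U'}(6)=3<4=e_U(6)$, so the per-vertex monotonicity fails. (Here $\tau$ still goes up, $28\to 31$, but your argument for \emph{why} it goes up collapses.) The case split you sketch---delete a cycle edge and reduce to the tree lemma---cannot rescue this, because the obstruction is intrinsic to the cycle: distances around an even cycle create eccentric vertices that are not diametrical endpoints. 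You would need a genuinely different mechanism to control the total, and at that point the paper's spanning-tree shortcut is both simpler and already sufficient.
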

\begin{figure}[h]
	\centering
	\includegraphics[width=11cm]{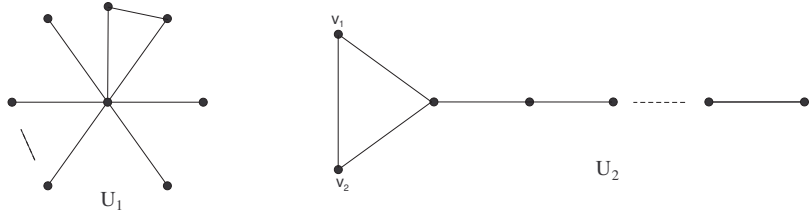}
	\caption{Extremal unicyclic graphs $U_1$ and $U_2$ with respect to total-eccentricity index.}\label{ext_uni}
\end{figure}
\begin{proof}
Let $T\ncong P_n$ be an $n$-vertex tree and $T_1 \cong U_2 - \{v_1v_2\}$. We show that $\tau(T)\leq \tau(T_1)$. There exists a pendant vertex $u_1 \in V(T)$ such that $e_{T}(u) = e_{T - u_1}(u)$ for all $u\in V(T)\setminus \{u_1\}$.
Then from Theorem~\ref{tau max tree}, we obtain
\begin{equation}\label{eq1}
\tau(T - u_1) \leq \tau(T_1 - v_1).
\end{equation}
Note that $e_{T_1 - v_1}(u) = e_{T_1}(u)$ for each $u\in V(T_1) \setminus \{v_1\}$ and $e_{T_1}(v_1) = n-2$. Thus
	\begin{equation}\label{eq2}
	\tau(T_1) = \tau(T_1 - v_1) + n - 2.
	\end{equation}
Also, $e_{T - u_1}(u) = e_T(u)$ for each $u\in V(T) \setminus \{u_1\}$ and $e_T(u_1) \leq n - 2$. Thus
	\begin{eqnarray}\label{eq3}
	\tau(T) \leq \tau(T - u_1) + n - 2.
	\end{eqnarray}
From \eqref{eq1}-\eqref{eq3}, we obtain
	\begin{eqnarray*}
		\tau(T) \leq \tau(T_1).
	\end{eqnarray*}
Also observe that $\tau(T_1) = \tau(U_2)$. Thus $\tau(T) \leq \tau(U_2)$.
Now, if we join any two non-adjacent vertices in $T$, it gives us a unicyclic graph $U$ and $\tau(U) \leq \tau(T)$.
Thus, $\tau(U) \leq \tau(U_2)$.
This completes the proof.
\end{proof}
\begin{corollary}
For any unicyclic graph $U$, we have $\tau(U) \leq \frac{n(n-1)}{2} - 1$.
\end{corollary}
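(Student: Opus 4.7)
The plan is to combine Theorem~\ref{tau max uni} with an explicit evaluation of $\tau(U_2)$. Theorem~\ref{tau max uni} already gives $\tau(U)\le\tau(U_2)$ for every $n$-vertex unicyclic graph $U$, so the corollary collapses at once to the single identity $\tau(U_2)=\frac{n(n-1)}{2}-1$.

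To carry out that calculation, I would use the observation made inside the proof of Theorem~\ref{tau max uni} that $\tau(U_2)=\tau(T_1)$, where $T_1\cong U_2-\{v_1v_2\}$ is the $n$-vertex tree obtained from $P_{n-1}$ by hanging a single pendant $v_1$ at the second vertex of the path (this placement is forced because the step $\tau(T-u_1)\le\tau(T_1-v_1)$ in the proof of Theorem~\ref{tau max uni} must be tight, so $T_1-v_1\cong P_{n-1}$, and we need $e_{T_1}(v_1)=n-2$). Labelling the path vertices $u_1u_2\cdots u_{n-1}$ with $v_1$ adjacent to $u_2$, a vertex-by-vertex reading of eccentricities gives $e_{T_1}(v_1)=e_{T_1}(u_1)=e_{T_1}(u_{n-1})=n-2$ and $e_{T_1}(u_k)=\max\{k-1,n-1-k\}$ for the remaining path-vertices. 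Summing these contributions and recognising the sum over the path-vertices as $\tau(P_{n-1})$ packages the computation into
\begin{equation*}
\tau(U_2)=(n-2)+\tau(P_{n-1}).
\end{equation*}

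The final step would be to substitute the closed form for $\tau(P_{n-1})$ given in equation~\eqref{tau Pn}, split according to the parity of $n-1$, and simplify to reach the announced bound. The main source of friction, beyond the parity bookkeeping, is the auxiliary verification that the triangle-closing edge $v_1v_2$ in $U_2$ does not shorten any eccentric path of $T_1$, so that the eccentricities of $U_2$ and $T_1$ genuinely coincide; this is straightforward once the cycle of $U_2$ is drawn but is the one place where the argument leans on the specific structure in Figure~\ref{ext_uni} rather than just on Theorem~\ref{tau max uni}.
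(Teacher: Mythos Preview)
Your approach is exactly the paper's: its entire proof reads ``after simple computation we see that $\tau(U_2)=\frac{n(n-1)}{2}-1$; thus the proof is obvious by Theorem~\ref{tau max uni}.'' What you have written is a fleshed-out version of that same one-line computation, and the identity $\tau(U_2)=(n-2)+\tau(P_{n-1})$ you derive is correct and is precisely what the paper itself uses implicitly (and again later, via $\tau(B_2)=\tau(U_2)+(n-3)$, in Lemma~\ref{tau max bi}).

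The place where your plan will break down is the very last step, ``simplify to reach the announced bound.'' Substituting~\eqref{tau Pn} into $(n-2)+\tau(P_{n-1})$ gives
\[
\tau(U_2)=
\begin{cases}
\dfrac{3n^{2}-4n-4}{4} & n\text{ even},\\[2mm]
\dfrac{3n^{2}-4n-3}{4} & n\text{ odd},
\end{cases}
\]
which is strictly larger than $\frac{n(n-1)}{2}-1$ for every $n\ge 4$ (for instance $n=5$ yields $\tau(U_2)=13$, while $\frac{n(n-1)}{2}-1=9$). So the value asserted in the corollary is a slip in the paper rather than something your computation can reproduce; the inequality $\tau(U)\le\frac{n(n-1)}{2}-1$ is actually false as stated. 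Your argument is sound up to that point and delivers the correct sharp bound displayed above, which is also consistent with the paper's own Remark~\ref{remark 3} through the relation used in Lemma~\ref{tau max bi}.
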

\begin{proof}
After simple computation, we see that $\tau(U_2) = \frac{n(n-1)}{2} - 1$.
Thus the proof is obvious by using Theorem~\ref{tau max uni}.
\end{proof}

Now we find the extremal bicyclic graphs with respect to total-eccentricity index.
The following remark can be obtained by simple computation.
\begin{remark}\label{remark 3}
Let $B_1$, $B_1'$, $B_2$ and $B_2'$ be $n$-vertex bicyclic graphs shown in Figure~\ref{ext_bi}. Then the total-eccentricity index of these graphs is given by
\begin{enumerate}
		\item $\tau(B_1) = \tau(B_1') = 2n-1$.
		\item $\tau(B_2) = 
		\begin{cases}
		\frac{3}{4}n^2 - \frac{3}{2}n -2 & \hbox{ when $n$ is even}\\
		\frac{3}{4}n^2 - \frac{3}{2}n - \frac{9}{4} & \hbox{ when $n$ is odd.}
		\end{cases}$
		\item $\tau(B_2') = 
		\begin{cases}
		\frac{3}{4}n^2 - n -2 & \hbox{when $n$ is even}\\
		\frac{3}{4}n^2 - n - \frac{7}{4} & \hbox{when $n$ is odd.}
		\end{cases}$
	\end{enumerate}
\end{remark}
\begin{remark}\label{remark 4}
Let $G$ be a connected graph and $C_k$ be a cycle of length $k$ in $G$.
Then each diametrical path in $G$ contains at most $\lfloor\frac{k}{2}\rfloor + 1$ vertices of $C_k$ and at most $\lfloor\frac{k}{2}\rfloor$ edges of $C_k$.
\end{remark}
\begin{figure}[h!]
	\centering
	\includegraphics[width=12cm]{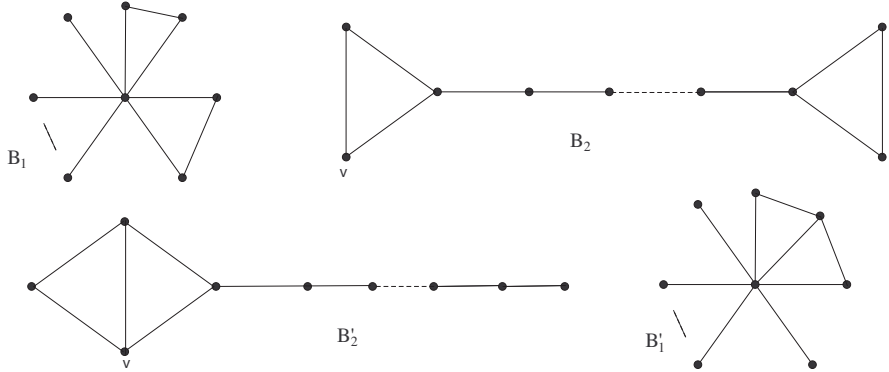}
	\caption{Bicyclic graphs $B_1$, $B_1'$, $B_2$ and $B_2'$.}\label{ext_bi}
\end{figure}
\begin{theorem}\label{tau min bi}
Among all $n$-vertex bicyclic graphs, $n\geq 5$, the graph $B_1$ shown in Figure~\ref{ext_bi} has minimal total-eccentricity index.
\end{theorem}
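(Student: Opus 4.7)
The plan is to prove $\tau(B) \geq 2n-1 = \tau(B_1)$ for every $n$-vertex bicyclic graph $B$, where the equality on the right is already supplied by Remark~\ref{remark 3}. The natural dichotomy is whether or not $B$ possesses a vertex of eccentricity $1$, i.e., a vertex adjacent to every other vertex of $B$.

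If $B$ has no such vertex, then $e_B(u) \geq 2$ for every $u \in V(B)$, so $\tau(B) \geq 2n > 2n-1$ and we are done immediately. Suppose instead some $c \in V(B)$ is universal. Then the star centred at $c$ is a spanning subgraph of $B$ using $n-1$ edges, and since a bicyclic graph has $n+1$ edges, exactly two additional edges remain, each of which necessarily lies among the $n-1$ leaves. In particular $\mathrm{diam}(B) = 2$, since any two leaves are connected through $c$. To see that every leaf $v$ has eccentricity exactly $2$, I would observe that $v$ is adjacent to $c$ and to at most two other leaves (only two extra edges are available), so $|N_B(v)| \leq 3$; the hypothesis $n \geq 5$ then gives $3 < n-1$, so $v$ has a non-neighbour and $e_B(v) \geq 2$, while the diameter bound yields $e_B(v) \leq 2$. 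Summing, $\tau(B) = 1 + 2(n-1) = 2n-1 = \tau(B_1)$.

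The delicate point, and the one I expect to require the most care, is the role of the assumption $n \geq 5$: it is exactly what prevents a leaf from being joined to \emph{every} remaining leaf using only two extra edges, which would otherwise create a second vertex of eccentricity $1$ and collapse the bookkeeping in case~(ii). For $n \geq 5$ the two cases combine cleanly, and the graphs attaining equality are precisely those built from a universal vertex plus two extra edges among the leaves, i.e., $B_1$ and $B_1'$, consistent with the equality $\tau(B_1) = \tau(B_1')$ recorded in Remark~\ref{remark 3}.
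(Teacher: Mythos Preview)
Your argument is correct and rests on the same idea as the paper's proof: every vertex of a bicyclic graph has eccentricity at least $2$, except possibly a single universal vertex, and $\tau(B_1)=2n-1$ from Remark~\ref{remark 3}. The paper's version is terser---it simply asserts that if $B\ncong B_1$ then $e_B(v)\ge 2$ for all $v\in V(B)$ and concludes $\tau(B_1)\le\tau(B)$---but that assertion, read literally, overlooks $B_1'$ and any other bicyclic graph obtained from a star by adding two edges among the leaves, all of which also contain a vertex of eccentricity~$1$. Your dichotomy on the existence of a universal vertex, together with the degree bound $|N_B(v)|\le 3<n-1$ (which is exactly where $n\ge 5$ enters), fills this small gap and moreover characterises the equality cases as precisely the graphs $B_1$ and $B_1'$ of Figure~\ref{ext_bi}.
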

\begin{proof}
Consider a bicyclic graph $B\ncong B_1$.
Then $e_B(v)\geq 2$ for all $v\in V(B)$.
Since, $e_{B_1}(v)\leq 2$ for all $v\in V(B_1)$.
Thus, $\tau(B_1) \leq \tau(B)$.
\end{proof}
\begin{corollary}
For any bicyclic graph $B$, we have $\tau(B)\geq 2n-1$.
\end{corollary}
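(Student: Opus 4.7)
The plan is to derive this corollary directly as a combination of Theorem~\ref{tau min bi} and the first part of Remark~\ref{remark 3}. Theorem~\ref{tau min bi} already furnishes the minimizer: for every $n$-vertex bicyclic graph $B$ we have $\tau(B) \geq \tau(B_1)$. So the only thing left is to substitute the explicit value of $\tau(B_1)$.

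First I would invoke Remark~\ref{remark 3}(1), which records $\tau(B_1) = 2n - 1$. This is a direct eccentricity calculation on the specific graph $B_1$ depicted in Figure~\ref{ext_bi}; since the remark is already stated in the paper, I may cite it without reproducing the computation. The substitution $\tau(B) \geq \tau(B_1) = 2n - 1$ then yields the claim immediately.

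If one wanted to double-check the value $\tau(B_1) = 2n-1$, it suffices to note that $B_1$ consists of a central vertex (the branching vertex of the two triangles sharing that vertex, or whichever configuration is drawn) with $e_{B_1} = 1$ together with the remaining $n-1$ vertices, each at eccentricity $2$, summing to $1 + 2(n-1) = 2n - 1$; but since Remark~\ref{remark 3} already supplies this, I would not belabor the arithmetic.

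There is no genuine obstacle in this corollary: its entire content is packaging the previous theorem with the stated numerical value. The proof can therefore be a single line, exactly as in the analogous corollary following Theorem~\ref{tau min uni}.
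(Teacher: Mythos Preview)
Your proposal is correct and matches the paper's own proof essentially verbatim: the paper simply writes ``By Remark~\ref{remark 3} and Theorem~\ref{tau min bi}, the proof is obvious.'' Your additional sanity check on the value $\tau(B_1)=2n-1$ is fine but, as you note, unnecessary given Remark~\ref{remark 3}.
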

\begin{proof}
By Remark~\ref{remark 3} and Theorem~\ref{tau min bi}, the proof is obvious.
\end{proof}
Let $\mathcal{B}_1$ denotes the class of those $n$-vertex bicyclic graphs which have exactly two cycles, $n\geq 5$. 
Then the maximal graph with respect to total-eccentricity index in $\mathcal{B}_1$ is obtained in the next result.
\begin{lemma}\label{tau max bi}
Among all $n$-vertex bicyclic graphs in $\mathcal{B}_1$, $n\geq 5$, the graph $B_2$ shown in Figure~\ref{ext_bi} has the maximal total-eccentricity index.
\end{lemma}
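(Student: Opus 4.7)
The proof will proceed by induction on $n$, adapting the pendant-peeling strategy of Theorem~\ref{tau max uni}. For the base case $n=5$, the class $\mathcal{B}_1$ contains only the bowtie graph (two triangles sharing one vertex), which coincides with $B_2$, so the claim holds vacuously.

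For the inductive step, fix $B \in \mathcal{B}_1$ with $n \geq 6$ and $B \ncong B_2$, and assume the claim is known for $n-1$. The main case is when $B$ has an ``expendable'' pendant vertex $u_1$, meaning a pendant such that $e_{B - u_1}(w) = e_B(w)$ for every $w \neq u_1$; such a $u_1$ exists whenever $B$ has at least one pendant lying off every diametrical path, exactly as in the proof of Theorem~\ref{tau max uni}. Since pendant removal does not affect the cycles, $B - u_1$ remains in $\mathcal{B}_1$ on $n-1$ vertices, and the inductive hypothesis bounds $\tau(B - u_1)$ by the total-eccentricity of the $(n-1)$-vertex analogue of $B_2$. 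A diameter bound $\mathrm{diam}(B) \leq n - 3$---obtained by applying Remark~\ref{remark 4} to the two cycles of $B$ and showing that at least two vertices must lie off every diametrical path---then controls $e_B(u_1) \leq n - 3$. The induction closes using the inequality $\tau(B_2^{(n)}) - \tau(B_2^{(n-1)}) \geq n - 3$, a routine manipulation of the closed-form in Remark~\ref{remark 3}.

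The remaining subcases are those without an expendable pendant. The chief one is when $B$ has no pendant at all: then $B$ consists of two cycles $C_{k_1}, C_{k_2}$ connected by a (possibly empty) path of length $p$, with $k_1 + k_2 + p - 1 = n$. Here I would compute $\tau(B)$ directly as a function of $(k_1, k_2, p)$ using the standard cycle-distance formulas and verify strict monotonicity under the transformation ``shrink a cycle by one vertex, lengthen the connecting path by one vertex'' whenever $k_i \geq 4$. Iterating, the maximum is attained at $(k_1, k_2, p) = (3, 3, n-5)$, which is precisely $B_2$. The few remaining configurations---those where every pendant of $B$ is forced onto every diametrical path---can be reduced either to the structural case above or to an immediate application of the reasoning in Lemma~\ref{lemma 1}.

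The main obstacle will be the parameterized monotonicity computation in the no-pendant subcase. While the intuition is clear (smaller cycles permit a longer connecting path, which increases almost every eccentricity), the exact effect of the shrink-and-extend move on individual eccentricities must be tracked case-by-case, with special attention to the boundary between $p = 0$ (cycles sharing a vertex) and $p \geq 1$ (cycles separated by a genuine path). A secondary technical point is the explicit verification of $\mathrm{diam}(B) \le n-3$ for $B \in \mathcal{B}_1$, which depends on careful vertex-accounting for diametrical paths via Remark~\ref{remark 4}, particularly in the shared-vertex configuration.
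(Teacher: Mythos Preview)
Your inductive strategy is workable, but the paper takes a much shorter route that avoids all of your casework. Instead of staying inside $\mathcal{B}_1$ and inducting on $n$, the paper passes immediately to a spanning tree: for any $B\in\mathcal{B}_1$ one can delete two disjoint edges $e_1,e_2$ (one from each cycle) to obtain a tree $T$ with at least four pendant vertices, hence $\mathrm{diam}(T)\le n-3$. Since $\tau(B)\le\tau(T)$, it suffices to bound $\tau(T)$. One then removes a single expendable pendant $u_1$ (which exists because $T$ has $\ge 4$ pendants), notes that $T-u_1$ is an $(n-1)$-vertex tree that is still not a path, and invokes the already-proved unicyclic maximum (Theorem~\ref{tau max uni}) to get $\tau(T-u_1)\le\tau(U_2)$. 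The identity $\tau(B_2)=\tau(U_2)+(n-3)$ then closes the argument in one line.

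The key difference is that the paper leverages the spanning-tree inequality $\tau(B)\le\tau(T)$ together with the unicyclic result, so it never needs your no-pendant parameterized computation, your monotonicity-under-shrink-and-extend argument, or the residual subcases where every pendant lies on a diametrical path. Your approach buys self-containment (it does not appeal to Theorem~\ref{tau max uni} as a black box), at the cost of that extra structural work; the paper's approach buys brevity by reusing the unicyclic extremum. Both yield the same diameter bound $\mathrm{diam}\le n-3$, but the paper obtains it for free from the four-pendant tree rather than via the cycle-by-cycle accounting of Remark~\ref{remark 4}.
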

\begin{proof}
First note that $B_2 - v \cong U_2$ and $\tau(B_2) = \tau(U_2)+n-3$, where $U_2$ is the $n-1$ vertex unicyclic graph shown in Figure~\ref{ext_uni}.
Let $B\in \mathcal{B}_1$ be any $n$-vertex bicyclic graph. Then there exist two disjoint edges $e_1,e_2\in E(B)$ such that $B - \{e_1,e_2\}$ is a tree and $B - \{e_1,e_2\}$ has at least four pendant vertices. Let $T \cong B - \{e_1,e_2\}$. Then
\begin{equation}\label{extbi 1}
e_{T}(v) \leq n - 3,\quad \forall~ v\in V(T).
\end{equation}
Obviously $\tau(B) \leq \tau(T)$.
Since $T\ncong P_n$, using Lemma~\ref{eccentric-3} there exists a pendant vertex $u_1\in V(T)$ such that
\begin{equation}\label{extbi 2}
e_{T}(u) = e_{T - u_1}(u),\quad \forall~ u\in V(T)\setminus \{u_1\}.
\end{equation}
Note that $T - u_1 \ncong P_{n-1}$.
Then as shown in the proof of Theorem~\ref{tau max uni} that
\begin{equation}\label{extbi 3}
\tau(T - u_1) \leq \tau(U_2).
\end{equation}
From \eqref{extbi 1} and \eqref{extbi 2}, we obtain
\begin{equation}\label{extbi 4}
\tau(T) \leq \tau (T - u_1) + n-3.
\end{equation}
From \eqref{extbi 3} and \eqref{extbi 4}, we have
\begin{equation*}
\tau(B) \leq \tau (T - u_1) + n-3 \leq \tau(U_2) + n-3 = \tau(B_2).
\end{equation*}
The proof is complete.
\end{proof}
Let $\mathcal{B}_2$ denotes the class of all $n$-vertex bicyclic graphs which have exactly three cycles, $n\geq 5$. Then we have the following result.
\begin{lemma}\label{tau max bi 2}
Among all $n$-vertex bicyclic graphs in $\mathcal{B}_2$, $n\geq 5$, the graph $B_2'$ shown in Figure~\ref{ext_bi} has the maximal total-eccentricity index.
\end{lemma}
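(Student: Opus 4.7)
The plan is to mirror the proof of Lemma~\ref{tau max bi} adapted to the three-cycle (theta) case. The first step is to identify a vertex $v \in V(B_2')$ such that $B_2' - v$ is isomorphic to the extremal unicyclic graph $U_2$ on $n-1$ vertices, with $e_{B_2'}(v) = n-2$; this should give the structural inequality $\tau(B_2') \geq \tau(U_2) + (n-2)$, which is all that is ultimately needed.

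Next, for any $B \in \mathcal{B}_2$ with $B \ncong B_2'$, the graph $B$ contains a theta subgraph with two poles joined by three internally disjoint paths. I would remove two edges from two distinct paths of this theta to obtain a spanning tree $T$ on $n$ vertices satisfying $T \ncong P_n$; this can be arranged, for instance by selecting both removed edges to be incident with a common theta-pole so that the pole becomes a pendant of $T$. Since adding edges cannot increase eccentricities, $\tau(B) \leq \tau(T)$. By Remark~\ref{remark 4} applied to the smallest cycle of $B$, we obtain $e_T(w) \leq n-2$ for every $w \in V(T)$.

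Applying Lemma~\ref{eccentric-3} to $T$ furnishes a pendant vertex $u_1 \in V(T)$ with $e_T(u) = e_{T-u_1}(u)$ for every $u \ne u_1$, and $e_T(u_1) \leq n-2$. Since $T - u_1$ is an $(n-1)$-vertex tree not isomorphic to $P_{n-1}$, the argument in the proof of Lemma~\ref{tau max bi} yields $\tau(T - u_1) \leq \tau(U_2)$ on $n-1$ vertices. Combining,
\[
\tau(B) \leq \tau(T) \leq \tau(T-u_1) + (n-2) \leq \tau(U_2) + (n-2) \leq \tau(B_2').
\]

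The main obstacle I expect is handling those $B \in \mathcal{B}_2$ whose every spanning tree is isomorphic to $P_n$, since the argument above needs $T \ncong P_n$; such degenerate theta configurations must be treated by a direct structural comparison with $B_2'$. A secondary subtlety is the precise verification of the decomposition $B_2' - v \cong U_2$ in the first step, which relies on the specific shape of $B_2'$ in Figure~\ref{ext_bi} and on checking that $v$ is a peripheral vertex whose removal does not reduce the eccentricities of the remaining vertices past the level of $U_2$.
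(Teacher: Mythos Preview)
Your chain has a genuine gap at the step ``$T-u_1$ is an $(n-1)$-vertex tree not isomorphic to $P_{n-1}$.'' You never justify this, and it can fail. In Lemma~\ref{tau max bi} the analogous step works because the spanning tree there has at least four pendant vertices (two independent edges are deleted from two \emph{disjoint} cycles), so deleting one pendant still leaves at least three and $T-u_1\ncong P_{n-1}$. In $\mathcal{B}_2$ the theta subgraph allows spanning trees with exactly three pendants. Concretely, take $B$ to be the path $v_1v_2\cdots v_{n-1}$ together with a vertex $x$ adjacent to $v_{i-1},v_i,v_{i+1}$ for some $3\le i\le n-3$; this is a graph in $\mathcal{B}_2$ not isomorphic to $B_2'$. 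Any spanning tree of $B$ obtained by deleting two theta-edges is a path $P_{n-1}$ with one extra pendant, and the pendant $u_1$ you are forced to pick (the one off the diametrical path) satisfies $T-u_1\cong P_{n-1}$. Then $\tau(T-u_1)=\tau(P_{n-1})>\tau(U_2)$ on $n-1$ vertices, so your inequality $\tau(T-u_1)\le\tau(U_2)$ is false, and the resulting bound $\tau(B)\le\tau(P_{n-1})+(n-2)$ exceeds $\tau(B_2')=\tau(P_{n-1})+(n-3)$.

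The obstacle you flagged---graphs whose every spanning tree is $P_n$---is not the real difficulty (indeed, every $B\in\mathcal{B}_2$ has a non-path spanning tree, as your pole construction shows). The real difficulty is precisely the family above, where $\mathrm{diam}(B)=n-2$. The paper deals with this by a case split on $\mathrm{diam}(B)$: when $\mathrm{diam}(B)=n-2$ it argues directly that $B-x\cong P_{n-1}$ for the unique off-path vertex $x$ with $e_B(x)\le n-3$, giving $\tau(B)\le\tau(P_{n-1})+(n-3)=\tau(B_2')$; when $\mathrm{diam}(B)\le n-3$ it uses the sharper bound $e_T(v)\le n-3$ and then the argument of Lemma~\ref{tau max bi} goes through. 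Your single-bound route with $n-2$ in place of $n-3$ cannot close the gap without some such separation. (As a minor point, your appeal to Remark~\ref{remark 4} for $e_T(w)\le n-2$ is unnecessary: this follows immediately from $T\ncong P_n$.)
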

\begin{proof}
Let $B_2' \in \mathcal{B}_2$ be the graph shown in Figure~\ref{ext_bi}.
Note that $B_2' - v \cong P_{n-1}$ and
\begin{equation}\label{b2_pn}
\tau(B_2') = \tau(P_{n-1}) + n-3.
\end{equation}
We show that, $B_2'$ has maximal total-eccentricity index in $\mathcal{B}_2$.
Let $B \in \mathcal{B}_2$ with cycles $C_{k_1}$, $C_{k_2}$ and $C_{k_3}$.
Without loss of generality, assume that $k_1\leq k_2\leq k_3$.
Then $3\leq k_1,k_2 \leq n-1$ and $4\leq k_3\leq n$.
Since $B$ is not a path, it holds that ${\rm diam}(B) \leq n-2$.
\begin{figure}[h]
	\centering
	\includegraphics[width=10cm]{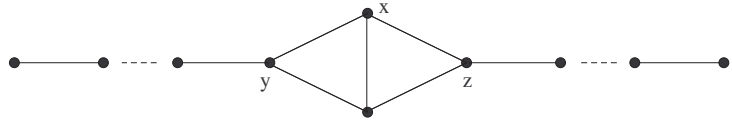}
	\caption{A bicyclic graph $B\in \mathcal{B}_2$. Here at most $y$ or $z$ are of degree $2$.}\label{tb2}
\end{figure}
\\
\textbf{Case 1}.
When ${\rm diam}(B) = n-2$.
Let $P$ be a diametrical path in $B$.
By Remark~\ref{remark 4}, $P$ contains at most $\lfloor\frac{k}{2}\rfloor + 1$ vertices of $C_{k_3}$.
With similar arguments, we have $k_1=k_2=3$.
Since length of $P$ is $n-2$, there is exactly one vertex, say $x$, of $C_{k_3}$ which does not belong to $V(P)$.
Then $x$ is a common vertex in $C_{k_1}$, $C_{k_2}$ and $C_{k_3}$ and $e_B(x) \leq n-3$.
The graph in this case is shown in Figure~\ref{tb2}.
We see that $B - x \cong P_{n-1}$ and
\begin{eqnarray*}
\tau(B) &=& \tau(P_{n-1}) + e_B(x)\\
&\leq& \tau(P_{n-1}) + (n-1)\\
&=& \tau(B_2').
\end{eqnarray*}
\textbf{Case 2}.
When ${\rm diam}(B) \leq n-3$.
Then there exist two edges $e_1,e_2 \in E(B)$ such that $T \cong B - \{e_1,e_2\}$ is a tree and
\begin{eqnarray*}
e_T(v) \leq n-3 \quad \forall~ v\in V(T).
\end{eqnarray*}
Following Lemma~\ref{tau max bi}, we can show that $\tau(B) \leq \tau(B_2')$. This completes the proof.
\end{proof}
From Lemma~\ref{tau max bi} and Lemma~\ref{tau max bi 2}, we have the following result.
\begin{theorem}
Among all $n$-vertex bicyclic graphs, $n\geq 5$, the graph $B_2'$ shown in Figure~\ref{ext_bi} has the maximal total-eccentricity index.
\end{theorem}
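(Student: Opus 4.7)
The plan is to combine the two preceding lemmas with the explicit formulas collected in Remark~\ref{remark 3}. Every $n$-vertex bicyclic graph $B$ has cyclomatic number two, so $B$ either has exactly two cycles (hence $B\in\mathcal{B}_1$) or exactly three cycles (hence $B\in\mathcal{B}_2$); there are no other possibilities because two independent cycles that share more than one vertex must share an entire path, producing precisely one additional cycle via the $\theta$-subgraph they form. Thus $\mathcal{B}_1$ and $\mathcal{B}_2$ partition the class of $n$-vertex bicyclic graphs.

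First I would invoke Lemma~\ref{tau max bi} to conclude that for every $B\in\mathcal{B}_1$ we have $\tau(B)\le \tau(B_2)$, and Lemma~\ref{tau max bi 2} to conclude that for every $B\in\mathcal{B}_2$ we have $\tau(B)\le \tau(B_2')$. Hence it suffices to show
\[
\tau(B_2)\le \tau(B_2')
\]
for all $n\ge 5$, since then the global maximum on $\mathcal{B}_1\cup\mathcal{B}_2$ is realized by $B_2'$.

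The remaining step is a direct comparison using the closed forms in Remark~\ref{remark 3}. For even $n$ we get
\[
\tau(B_2')-\tau(B_2) = \Bigl(\tfrac{3}{4}n^2-n-2\Bigr)-\Bigl(\tfrac{3}{4}n^2-\tfrac{3}{2}n-2\Bigr) = \tfrac{n}{2},
\]
and for odd $n$ we get
\[
\tau(B_2')-\tau(B_2) = \Bigl(\tfrac{3}{4}n^2-n-\tfrac{7}{4}\Bigr)-\Bigl(\tfrac{3}{4}n^2-\tfrac{3}{2}n-\tfrac{9}{4}\Bigr) = \tfrac{n}{2}+\tfrac{1}{2},
\]
both of which are strictly positive for $n\ge 5$. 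Combining this strict inequality with the two lemmas yields $\tau(B)\le \tau(B_2')$ for every $n$-vertex bicyclic graph $B$, with equality precisely when $B\cong B_2'$.

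The main obstacle is essentially bookkeeping rather than deep: one must be careful that the partition $\mathcal{B}_1\cup\mathcal{B}_2$ is exhaustive (which is where the $\theta$-graph observation is needed) and that the parity-split arithmetic from Remark~\ref{remark 3} is applied correctly. No new geometric argument is required beyond what the two lemmas already supply.
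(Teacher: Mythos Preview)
Your proposal is correct and follows essentially the same approach as the paper: both argue that every bicyclic graph lies in $\mathcal{B}_1$ or $\mathcal{B}_2$, invoke Lemmas~\ref{tau max bi} and~\ref{tau max bi 2} for the two classes, and then compare $\tau(B_2)$ with $\tau(B_2')$. The paper's proof is terser---it simply asserts $\tau(B_2)<\tau(B_2')$ without writing out the arithmetic or the partition argument---but the logic is identical.
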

\begin{proof}
Since $\tau(B_2) < \tau(B_2')$, the assertion follows from Lemmas~\ref{tau max bi} and~\ref{tau max bi 2}.
\end{proof}
\begin{corollary}
For any bicyclic graph $B$, we have $\tau(B)\leq \frac{3}{4}n^2 - n -2$.
\end{corollary}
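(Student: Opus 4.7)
The proof is essentially a one-line chain once the previous theorem is available, and I would model it on the analogous corollaries that follow the unicyclic extremal results. The plan is to combine the structural statement that $B_2'$ maximizes $\tau$ over all $n$-vertex bicyclic graphs with the explicit closed-form evaluation of $\tau(B_2')$ already recorded in Remark~\ref{remark 3}.

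Concretely, let $B$ be an $n$-vertex bicyclic graph with $n\geq 5$. By the preceding theorem, $\tau(B)\leq \tau(B_2')$, so it suffices to bound $\tau(B_2')$. Remark~\ref{remark 3} computes $\tau(B_2')$ in each parity class of $n$: when $n$ is even, $\tau(B_2')=\tfrac{3}{4}n^2-n-2$, giving the stated bound with equality; when $n$ is odd, $\tau(B_2')=\tfrac{3}{4}n^2-n-\tfrac{7}{4}$. Thus in both cases $\tau(B)\leq \tau(B_2')$, and one only has to identify the right-hand side with the quantity $\tfrac{3}{4}n^2-n-2$ in the statement.

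The main obstacle — and the only genuine content beyond citing earlier results — is the parity issue: the claimed bound $\tfrac{3}{4}n^2-n-2$ matches $\tau(B_2')$ exactly in the even case, whereas in the odd case $\tau(B_2')=\tfrac{3}{4}n^2-n-\tfrac{7}{4}$, which is strictly larger than $\tfrac{3}{4}n^2-n-2$. To reconcile this within the write-up, I would either (i) phrase the conclusion as $\tau(B)\leq \tau(B_2')$ and state the corollary separately per parity of $n$, or (ii) interpret the inequality in the sense that the sharp bound is attained precisely when $n$ is even, tagging the odd case to the alternative value from Remark~\ref{remark 3}. Either way, no new combinatorial argument is required: the heavy lifting was done in Lemmas~\ref{tau max bi} and~\ref{tau max bi 2}, and the corollary is a bookkeeping consequence of those lemmas together with the arithmetic recorded in Remark~\ref{remark 3}.
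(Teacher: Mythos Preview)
Your approach matches the paper's exactly: the paper's proof is a single sentence stating that the result is obvious from Remark~\ref{remark 3} together with Lemmas~\ref{tau max bi} and~\ref{tau max bi 2}. So no different decomposition or key idea is at play.

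Your additional observation about the parity discrepancy is correct and in fact catches an oversight in the paper. For odd $n$ one has $\tau(B_2')=\tfrac{3}{4}n^2-n-\tfrac{7}{4}$, which exceeds $\tfrac{3}{4}n^2-n-2$ by $\tfrac{1}{4}$, so the inequality $\tau(B)\leq \tfrac{3}{4}n^2-n-2$ fails for $B=B_2'$ itself when $n$ is odd. The paper simply declares the corollary ``obvious'' without separating the parities, so the stated bound is sharp only for even $n$; for odd $n$ the correct sharp bound is $\tfrac{3}{4}n^2-n-\tfrac{7}{4}$, exactly as you diagnose. Your option~(i) of stating the bound per parity is the honest fix.
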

\begin{proof}
By using Remark~\ref{remark 3} and Lemmas~\ref{tau max bi} and~\ref{tau max bi 2}, the proof is obvious.
\end{proof}
\section{Extremal conjugated trees with respect to total-eccentricity index}

Consider a conjugated $n$-vertex tree $T$ with perfect a matching $M$, where $n\geq 2$.
It can be observed that $|M| = \frac{n}{2}$.
We define some families of trees which will be used in the later discussion. 
A subdivided star $S_{n,2}$, of order $2n-1$ is obtained by subdividing every edge in $S_n$ once.
A double star $DS_{k,n-k}$, where $k\geq 2$ and $n-k\geq 2$, of order $n$ is obtained by joining the centers of the stars $S_{k}$ and $S_{n-k}$ by an edge. The graphs $S_{n,2}$ and $DS_{k,n-k}$ are shown in Figure~\ref{min tree}.

The following results will be required to find extremal conjugated trees with respect to total-eccentricity index.
We construct a tree $S_{*}$ by deleting a pendant vertex from $S_{n,2}$ as shown in Figure~\ref{min tree}. We will see that $S_*$ is the unique conjugated tree of order $2(n-1)$ with  $\frac{2(n-1)}{2}$ pendant vertices.
\begin{figure}[h]
	\centering
	\includegraphics[width=14.3cm]{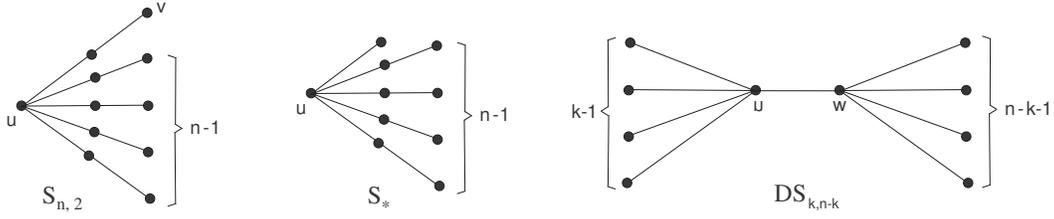}
	\caption{The subdivided star $S_{n,2}$, the tree $S_{*} \cong S_{n,2} - v$ and a double star $DS_{k,n-k}$.}\label{min tree}
\end{figure}
\begin{lemma}\label{conj lemma}
	Let $T$ be an $n$-vertex tree with $n\geq 4$.
	When 
	${\rm diam}(T)=3$ then $T \cong DS_{k,n-k}$, where $2\leq k\leq n-2$.
\end{lemma}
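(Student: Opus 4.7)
The plan is to fix a diametrical path $P = v_0 v_1 v_2 v_3$ (which has length $3$ and therefore exactly four vertices, consistent with $n \geq 4$), and to show that every vertex of $T$ is either one of the two interior vertices $v_1, v_2$ of $P$, or a pendant attached to one of them. The double-star structure, and the bounds $2 \leq k \leq n-2$, then fall out by counting.

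First I would show that any vertex $w \in V(T) \setminus V(P)$ must be adjacent to $v_1$ or $v_2$. Let $u$ be the vertex of $P$ closest to $w$ in $T$ (unique since $T$ is a tree). Any path in $T$ from $w$ to $v_3$ passes through $u$, so $d_T(w, v_3) = d_T(w, u) + d_T(u, v_3)$, and similarly for $v_0$. If $u = v_0$, then $d_T(w, v_3) = d_T(w, v_0) + 3 \geq 4$, contradicting $\mathrm{diam}(T) = 3$; the case $u = v_3$ is symmetric. If $u = v_1$, then $d_T(w, v_3) = d_T(w, v_1) + 2 \leq 3$ forces $d_T(w, v_1) = 1$; analogously for $u = v_2$.

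Next I would rule out additional structure at those neighbours. Suppose $w \neq v_0, v_2$ is adjacent to $v_1$; I claim $w$ is a pendant. Any neighbour $w'$ of $w$ with $w' \neq v_1$ would itself lie at distance $1$ from some $v_i \in \{v_1, v_2\}$ by the previous paragraph, which in each sub-case produces a cycle ($w v_1 w' w$ if $w'$ is adjacent to $v_1$, and $w v_1 v_2 w' w$ if $w'$ is adjacent to $v_2$), contradicting that $T$ is a tree. The same argument applies to vertices adjacent to $v_2$, and the same cycle argument shows no vertex is adjacent to both $v_1$ and $v_2$. Hence $V(T) \setminus \{v_1, v_2\}$ splits into the pendant neighbours of $v_1$ and the pendant neighbours of $v_2$.

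Finally, letting $k$ be the number of vertices on the $v_1$-side (namely $v_1$ together with its pendant neighbours), we get $n - k$ vertices on the $v_2$-side, and $T \cong DS_{k, n-k}$. Since $v_0$ is a pendant adjacent to $v_1$ we have $k \geq 2$, and since $v_3$ is a pendant adjacent to $v_2$ we have $n - k \geq 2$, i.e.\ $k \leq n - 2$. I do not expect any real obstacle; the only delicate point is making the ``closest vertex of $P$ to $w$'' argument fully rigorous using the fact that distances in trees are additive along the unique path.
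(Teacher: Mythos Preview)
Your proof is correct and follows essentially the same approach as the paper: fix a diametrical path of length $3$, argue that all remaining vertices must be pendants attached to the two interior vertices, and read off the double-star structure and the bounds on $k$. The paper's own proof is a terse two-liner that simply asserts ``the remaining $n-4$ vertices are adjacent to $v_2$ or $v_3$'' without justification, so your version is a fleshed-out variant of the same idea rather than a different route.
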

\begin{proof}
	When ${\rm diam}(T) = 3$, we consider a diametrical path $v_1v_2v_3v_4$.
	Since ${\rm diam}(T) = 3$, the remaining $n - 4$ vertices are adjacent to $v_2$ or $v_3$. Thus $T \cong DS_{k,n-k}$ for $2\leq k\leq n-2$.
\end{proof}
\begin{remark}\label{rem}
	The total-eccentricity index of $S_{n,2}$, $S_*$ and $DS_{k,n-k}$ is given by
	\begin{eqnarray*}
		\tau(S_{n,2}) = \frac{7n}{2} - \frac{3}{2},~~~
		\tau(S_*) = \frac{7n}{2}-2
		{\rm ~~~and~~~ }
		\tau(DS_{k,n-k}) = 3n-2.
	\end{eqnarray*}
\end{remark}
\begin{remark}\label{remark}
	An $n$-vertex conjugated tree, $n\geq 4$, can have at most $\frac{n}{2}$ pendant vertices. If $T$ is an $n$-vertex conjugated tree with exactly $\frac{n}{2}$ pendant vertices then $T \cong S_*$, where $S_*$ is shown in Figure~\ref{min tree}.
\end{remark}
Let $T$ be an $n$-vertex conjugated tree, $n\geq 4$, with a perfect matching $M$ and let $c$ be a vertex in $V(T)$ with  $e_T(c) = {\rm rad}(T)$. Let $w\in V(T)$ such that $d_T(c,w) = {\rm rad}(T)$. 
Then there exist $u,v\in V(T)$ such that $uvw$ is a path of length $2$ with $d_T(w) = 1$ and $vw \in M$.
We denote the set of all such paths of length $2$ in $T$ by $B_r$ and define it as follows.
\begin{equation}\label{algo}
B_r = \{uv ~|~ uvw {\rm ~is ~a~ path~ in~} T {\rm ~with~} d_T(c,w) = {\rm rad}(T)\}.
\end{equation}
Now we proceed to find the conjugated trees with minimal total-eccentricity index. 
We first device an algorithm to construct the tree $S_*$ from a given $n$-vertex tree $T$, $n\geq 4$.\\

\centerline{\textbf{Algorithm 3}}
\centerline{
			\begin{tabular}{clc}
				input: & An $n$-vertex conjugated tree $T$, $n\geq 4$. & \\
				output: & The tree $S_*$. & \\
				Step 0: & Find ${\rm rad}(T)$ by \eqref{rad}, a vertex $c\in V(T)$ with $e_T(c) = {\rm rad}(T)$ & \\
				& and define $B_r$ by \eqref{algo}. & \\
				Step 1: & If ${\rm rad}(T)=2$, then Stop. & \\
				Step 2: & For an edge $uv\in B_r$, define $T:= \{T - \{uv\}\} \cup \{cv\}$ and & \\
				& $B_r:= B_r \setminus \{uv\}$.
				& \\
				Step 3: & If $B_r \neq \emptyset$ then go to Step 2; else define ${\rm rad}(T)$ by \eqref{rad} and $B_r$ by \eqref{algo}; & \\
				& go to Step 1. &
\end{tabular}}
\vspace{.3cm}
Now we discuss the correctness and termination of Algorithm 3.
\begin{theorem}[Termination]\label{terminate}
	The Algorithm 3 terminates after a finite number of iterations.
\end{theorem}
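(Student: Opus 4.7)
The plan is to follow verbatim the template of the termination proof for Algorithm~2 (Theorem~\ref{termination 2}), decoupling an inner-loop bound from an outer-loop bound. For the inner loop, I observe that each visit to Step~2 removes exactly one edge from $B_r$ and never inserts any, so at most $|B_r| \le n/2$ inner iterations occur before $B_r$ empties and [Step~3 $\rightarrow$ Step~1] is forced.

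For the outer loop, I want to prove that each recomputation at [Step~3 $\rightarrow$ Step~1] strictly decreases ${\rm rad}(T)$. Since the algorithm halts as soon as ${\rm rad}(T)=2$, in every actually executed Step~2 we may assume ${\rm rad}(T) \ge 3$. Analyzing one Step~2 transformation: the edge $uv$ is replaced by $cv$, where $uvw$ is a length-$2$ path ending at a pendant $w$ with $d_T(c,w) = {\rm rad}(T)$. Removing $uv$ separates $T$ into the component containing $c$ (through $u$) and the subtree $S_v$ hanging off $v$ on the side away from $c$; adding $cv$ reattaches $S_v$ directly to $c$. Distances from $c$ to vertices outside $S_v$ are preserved, while for every $x \in S_v$ the new distance $1 + d_T(v,x)$ is strictly smaller than the old $({\rm rad}(T)-1) + d_T(v,x)$. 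In particular, $d(c,w)$ drops from ${\rm rad}(T)$ to $2$.

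The structural observation that closes the argument is that every vertex $z$ with $d_T(c,z) = {\rm rad}(T)$ must be pendant — otherwise some neighbour of $z$ off the $(c,z)$-path would sit at distance ${\rm rad}(T)+1 > e_T(c)$, a contradiction — and in a conjugated tree such a pendant is matched to its unique neighbour, so its corresponding edge $uv$ belongs to $B_r$. Moreover, two distinct edges $uv, uv' \in B_r$ cannot share the support vertex $v$, for then two pendants would compete for the same matching edge at $v$. Hence the detached subtrees $S_v, S_{v'}$ are pairwise disjoint and successive Step~2 modifications do not interfere. Consequently, once the inner loop has exhausted $B_r$, every originally farthest vertex has been pulled to distance $2$ from $c$, while every other vertex still lies within distance ${\rm rad}(T)-1$ of $c$. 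Therefore $e_T(c) \le {\rm rad}(T)-1$ in the updated tree, and the recomputed radius is strictly smaller.

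The main obstacle I expect is precisely this non-interference verification: one must check that the edge $uv$ about to be processed still survives the preceding Step~2 swaps (it does, since distinct $B_r$-edges live in disjoint detached subtrees and only the edge currently being processed is removed) and that the distance bounds propagate correctly through several successive modifications within the same outer pass. Once this is settled, ${\rm rad}(T)$ strictly decreases at every [Step~3 $\rightarrow$ Step~1] transition, and since it is a positive integer bounded below by $2$, only finitely many outer iterations are possible. Combined with the inner-loop bound, Algorithm~3 terminates in finitely many steps.
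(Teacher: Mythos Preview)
Your proposal is correct and follows exactly the two-level termination scheme (inner loop bounded by $|B_r|$, outer loop bounded by the strictly decreasing radius) that the paper uses for Algorithm~2. The paper's own proof of this theorem is a single sentence deferring to Theorem~\ref{termination 2}; your write-up supplies considerably more of the structural justification (pendancy of farthest vertices, non-interference of successive Step~2 swaps, why $e_{T'}(c)$ drops) than the paper ever makes explicit, but the underlying argument is the same.
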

\begin{proof}
	The proof follows from Theorem~\ref{termination 2}.
\end{proof}

\begin{theorem}[Correctness]\label{correct}
	If the Algorithm 3 terminates then it outputs $S_*$.
\end{theorem}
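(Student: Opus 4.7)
The plan is to prove the theorem by establishing two facts: (i) each iteration of Algorithm 3 preserves the property of being an $n$-vertex conjugated tree, and (ii) the only $n$-vertex conjugated tree of radius $2$ is $S_*$.

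For (i), I would argue as follows. Suppose at the start of an iteration $T$ is a conjugated tree of radius $r \geq 3$ with perfect matching $M$ and central vertex $c$. For any $uv \in B_r$, the defining path $uvw$ satisfies $vw \in M$, so $uv \notin M$; thus removing $uv$ does not destroy any edge of the matching. Since $r \geq 3$, the vertex $v$ has distance $r-1 \geq 2$ from $c$, so $c$ and $v$ are non-adjacent and $cv$ is a legal new edge. Removing $uv$ from the tree splits it into two components, one containing $v$ (and $w$) and the other containing $c$; adding $cv$ reconnects them, producing a graph on $n$ vertices with $n-1$ edges, i.e.\ a tree. The matching $M$, in which $vw$ is untouched and every other edge is unchanged, remains a perfect matching. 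Inductively, $T$ is a conjugated tree of order $n$ at every step of the algorithm.

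For (ii), suppose the algorithm halts; then ${\rm rad}(T) = 2$. Let $c$ be a center and write $V(T) = \{c\} \cup N(c) \cup L$, where $L$ is the set of vertices at distance $2$ from $c$. Each $\ell \in L$ is a leaf (else $T$ contains a vertex at distance $3$ from $c$) with a unique neighbour in $N(c)$. A vertex $\ell \in L$ can be $M$-saturated only by its unique neighbour in $N(c)$, so each $v \in N(c)$ has at most one neighbour in $L$; a vertex $v \in N(c)$ that is itself a leaf has its only possible matching partner equal to $c$. Since $c$ has exactly one matching partner in $M$, at most one neighbour of $c$ is a leaf, and perfectness of $M$ forces exactly one such. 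The remaining neighbours of $c$ each have exactly one pendant child to which they are matched. This configuration coincides precisely with $S_*$ as introduced before Remark~\ref{remark}.

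The main obstacle is the structural argument in (ii), where the matching constraint must be used to rule out all competing radius-$2$ configurations (stars, double stars with extra leaves, etc.) via what is effectively a pigeonhole argument on $c$'s single matching slot. The technical detail to check carefully in (i) is that $cv$ is not already an edge of $T$ when Step 2 fires, which is ensured by $v$'s distance from $c$ being at least $2$ whenever ${\rm rad}(T) \geq 3$.
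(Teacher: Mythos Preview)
Your proof is correct and follows essentially the same two-step strategy as the paper: first show that Step~2 preserves the property of being an $n$-vertex conjugated tree, then identify the terminal radius-$2$ conjugated tree as $S_*$. The paper's version of (i) additionally observes that $d_T(v)=2$ (so the detached component is exactly $\{v,w\}$), and in (ii) it asserts that the original central vertex $c$ remains central throughout, then argues briefly that a conjugated tree with $e_T(c)=2$ must have exactly one pendant neighbour of $c$, hence be $S_*$.

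Your treatment of (ii) is in fact a bit cleaner: rather than tracking that $c$ stays central across iterations, you prove outright that \emph{any} conjugated tree of radius $2$ is isomorphic to $S_*$, via the pigeonhole on $c$'s unique matching partner. This makes the argument independent of the algorithm's history. One small point worth tightening in (i): when Step~2 is executed repeatedly within the inner loop (before $B_r$ is recomputed), you should note that moving the pair $\{v,w\}$ does not alter $d_T(c,v')$ for the remaining $u'v'\in B_r$, so the inequality $d_T(c,v')=r-1\geq 2$ persists and $cv'$ is still a non-edge. This follows from $d_T(v)=2$, which you did not state explicitly but which is forced (a second leaf neighbour of $v$ could not be $M$-saturated).
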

\begin{proof}
	Let $T$ be an $n$-vertex conjugated tree, $n\geq 4$, with a perfect matching $M$. 
	Let $c\in V(T)$ such that $e_T(c) = {\rm rad}(T)$. 
	Define $B_r$ by \eqref{algo} and let $uv \in B_r$. Then there exists $w\in V(T)$ such that $d_T(w) = 1$ and $vw \in M$. Since $T$ is conjugated, we have $d_T(v) = 2$ and $uv\notin M$. 
	Therefore $\{T - \{uv\}\} \cup \{cv\}$ is also a conjugated tree with a perfect matching $M$.
	This shows that after the execution of Step 2 in any iteration of Algorithm 3, the modified graph at Step 2 is again an $n$-vertex conjugated tree.
	Thus, when Algorithm 3 terminates at Step 1, it outputs an $n$-vertex conjugated tree.
	We finally show that when the algorithm terminates at Step 1, then $T \cong S_*$.
	
	By the modifications of the tree at Step 2, the vertex $c$ remains the central vertex of the modified tree when [Step 3 $\rightarrow$ Step 1] is executed.
	If Algorithm 3 terminates at Step 1 then ${\rm rad}(T) = 2$ and $e_T(c) = 2$.
	This shows that $d_T(c,x) \leq 2$ for each $x\in V(T)$ at Step 1.
	Since $T$ is also a conjugated tree at Step 1, there is exactly one pendant vertex adjacent to $c$. This shows that $T \cong S_*$.
\end{proof}
\begin{theorem}\label{tau conj min}
	Among all $n$-vertex conjugated trees, $n\geq 4$, the graph $S_{*}$ shown in Figure~\ref{min tree} has the minimal total-eccentricity index.
\end{theorem}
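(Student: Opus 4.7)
The plan is to apply Algorithm 3 to any $n$-vertex conjugated tree $T\ncong S_*$ and show that each complete pass through the outer loop strictly decreases the total-eccentricity index; since Theorems~\ref{terminate} and~\ref{correct} guarantee that the algorithm terminates with $S_*$, this will yield $\tau(S_*)<\tau(T)$. Fix such a pass starting with tree $T$, central vertex $c$, and $r:={\rm rad}(T)\geq 3$; let the length-$2$ paths $u_iv_iw_i$ ($1\leq i\leq k$) be as in \eqref{algo}, and let $T'$ denote the tree at the end of the pass, obtained from $T$ by simultaneously replacing each edge $u_iv_i$ by $cv_i$. By the argument in the proof of Theorem~\ref{correct}, $T'$ is again an $n$-vertex conjugated tree.

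I would first show $e_{T'}(c)<e_T(c)$. In a tree every vertex at distance $r$ from $c$ is a leaf, and in a conjugated tree such a leaf is matched to its unique neighbour; together with $d_T(v_i)=2$ established in the proof of Theorem~\ref{correct}, this identifies the set of vertices at distance $r$ from $c$ in $T$ with $\{w_1,\ldots,w_k\}$. The modifications neither use nor alter any $(c,x)$-path for $x\notin\bigcup_i\{v_i,w_i\}$, while $d_{T'}(c,v_i)=1$ and $d_{T'}(c,w_i)=2$, so $e_{T'}(c)\leq r-1<e_T(c)$.

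Next I would verify $e_{T'}(x)\leq e_T(x)$ for every $x\in V(T)$, splitting into three cases. If $x\notin\bigcup_i\{v_i,w_i\}$, then $d_{T'}(x,v_i)=d_T(x,c)+1$ and $d_{T'}(x,w_i)=d_T(x,c)+2$ while other pairwise distances are preserved, so the inequality reduces to $d_T(x,c)+2\leq e_T(x)$; this follows from $d_T(x,c)\leq r-1$ together with the dichotomy that $x$ is either central (giving $d_T(x,c)\leq 1$, so $d_T(x,c)+2\leq r\leq e_T(x)$) or non-central (giving $e_T(x)\geq r+1\geq d_T(x,c)+2$). If $x=v_i$, the bound $e_{T'}(v_i)\leq 1+e_{T'}(c)\leq r\leq e_T(v_i)$ is immediate. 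If $x=w_i$, then $e_{T'}(w_i)\leq 2+e_{T'}(c)\leq r+1$, and it remains to show $e_T(w_i)\geq r+1$, which I would handle by splitting on whether $C(T)=K_1$ (so peripheral vertices of $c$ occupy at least two subtrees of $T-c$, producing some $w_j$ with $d_T(w_i,w_j)=2r$) or $C(T)=K_2$ (so a peripheral vertex of the other central vertex sits on the opposite side of $c$ at distance $r-1$, giving a vertex at distance $2r-1$ from $w_i$).

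Summing the inequalities yields $\tau(T')<\tau(T)$, and iterating across every outer loop of Algorithm 3 completes the proof. The main obstacle is the lower bound $e_T(w_i)\geq r+1$, since it is the only step that genuinely exploits structural information about where the peripheral vertices of $T$ sit relative to the centre; everything else is bookkeeping via the triangle inequality and the observation that the modified edges lie outside the $(c,x)$-paths for $x$ away from $\{v_i,w_i\}$.
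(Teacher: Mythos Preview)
Your proposal is correct and follows essentially the same approach as the paper: both run Algorithm~3 and show that each complete pass (i.e., each execution of [Step~3~$\rightarrow$~Step~1]) strictly decreases $\tau$, by establishing $e_{T'}(c)<e_T(c)$ together with $e_{T'}(x)\leq e_T(x)$ for all $x$. The paper simply asserts the latter inequality ``by the construction of $T'$'', whereas you supply the three-case verification (including the bound $e_T(w_i)\geq r+1$); your added detail is sound and merely fills in what the paper leaves implicit.
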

\begin{proof}
	Let $T\ncong S_*$ be an $n$-vertex conjugated tree, $n\geq 4$, and let $c$ be a central vertex of $T$.
	Define $B_r$ by \eqref{algo}. 
	We construct a new set of edges not in $E(T)$ by
	\begin{equation*}
	\tilde{B}_r = \{cv ~|~ uv\in B_r, v,w\in V(T)\}
	\end{equation*}
	and define a new conjugated tree $T'$ by
	\begin{equation*}
	T' \cong \{T - B_r\} \cup \tilde{B}_r.
	\end{equation*}
	Then we note that ${\rm rad}(T')$ is $r-1$ or $r-2$.
	By the construction of $T'$, we observe that
	\begin{equation}
	\label{t5} e_{T'}(x) \leq e_T(x) \quad \forall~ x\in V(T).
	\end{equation}
	Moreover, since $c$ is a central vertex of $T'$, we have
	\begin{equation}
	\label{t6} e_{T'}(c) \leq {\rm rad}(T) - 1 < e_{T}(c).
	\end{equation}
	From \eqref{t5} and \eqref{t6}, we obtain
	\begin{equation*}
	\tau(T') < \tau(T).
	\end{equation*}
	In fact, if $T$ is conjugated tree at Step 1 with ${\rm rad}(T) > 2$ in any iteration of Algorithm 3, then $T'$ is a conjugated tree at Step 3 when $B_r = \emptyset$.
	Thus when [Step 3 $\rightarrow$ Step 1] is executed, the total-eccentricity index strictly decreases.
	Since Algorithm 3 outputs $S_*$, we have $\tau(S_*) < \tau(T)$.
\end{proof}
\begin{corollary}
	Let $T$ be an $n$-vertex conjugated tree, then 
	\begin{equation}
	\tau(T)\leq \frac{7n}{2} - 2,
	\end{equation}
	where equality holds when $T \cong S_*$.
\end{corollary}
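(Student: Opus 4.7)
The plan is to derive this bound as an immediate consequence of the preceding Theorem~\ref{tau conj min} together with the explicit evaluation of $\tau(S_*)$ given in Remark~\ref{rem}. Theorem~\ref{tau conj min} asserts that among all $n$-vertex conjugated trees ($n \geq 4$), the tree $S_*$ attains the minimum of $\tau$, so $\tau(T) \geq \tau(S_*)$ for every such $T$, with equality precisely when $T \cong S_*$. Substituting the closed-form value $\tau(S_*) = \tfrac{7n}{2} - 2$ from Remark~\ref{rem} then yields the numerical bound together with its equality case. Since all the structural work was already carried out in Theorem~\ref{tau conj min} (via the Algorithm~3 reduction and the eccentricity-decrease inequalities \eqref{t5}--\eqref{t6}), the corollary is really a one-line substitution; no further argument is required and there is no genuine obstacle.

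I should, however, flag a directional issue with the statement as printed. The displayed inequality reads $\tau(T) \leq \tfrac{7n}{2} - 2$, but the combination of Theorem~\ref{tau conj min} with Remark~\ref{rem} produces the opposite inequality $\tau(T) \geq \tfrac{7n}{2} - 2$, and the stated equality condition ``$T \cong S_*$'' is consistent only with the $\geq$ reading, since a true upper bound achieved at $S_*$ would force $S_*$ to also be the $\tau$-maximizer, contradicting the theorem just proved. Accordingly, I would treat the printed ``$\leq$'' as a typographical inversion of ``$\geq$'' and write the corollary with the corrected sign; under that reading the two-line derivation above closes the argument. If one insists on the literal ``$\leq$'', no proof is possible because the statement is then false (any conjugated tree longer than $S_*$, e.g.\ a conjugated path of large even order, already exceeds $\tfrac{7n}{2} - 2$), so the only sensible plan is the one just described.
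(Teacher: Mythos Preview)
Your proposal is correct and matches the paper's own proof, which is literally the one-line ``The result follows by using Remark~\ref{rem} and Theorem~\ref{tau conj min}.'' You are also right that the displayed inequality is a typographical inversion: the combination of Theorem~\ref{tau conj min} with $\tau(S_*)=\tfrac{7n}{2}-2$ yields $\tau(T)\geq \tfrac{7n}{2}-2$, not $\leq$, and the stated equality case $T\cong S_*$ only makes sense under the $\geq$ reading.
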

\begin{proof}
	The result follows by using Remark~\ref{rem} and Theorem~\ref{tau conj min}.
\end{proof}
%
%
Among the class of all $n$-vertex conjugated trees, the maximal conjugated tree with respect to the total-eccentricity index in presented in the next theorem.
\begin{theorem}\label{path}
Among all $n$-vertex conjugated trees, the path $P_{n}$ has the maximal total-eccentricity index.
\end{theorem}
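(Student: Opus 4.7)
The plan is to observe that this theorem reduces immediately to the general maximization result for trees, namely Theorem \ref{tau max tree}, together with the simple observation that $P_n$ itself is a conjugated tree whenever the class in question is nonempty.

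First, I would remark that every conjugated tree has an even number of vertices, since a perfect matching $M$ pairs up the vertex set into $|M|=n/2$ disjoint edges. So the statement is really about $n$-vertex conjugated trees with $n$ even (and $n\geq 4$, which is the standing hypothesis). For such $n$, the path $P_n=v_1v_2\ldots v_n$ admits the perfect matching $M=\{v_1v_2,\,v_3v_4,\,\ldots,\,v_{n-1}v_n\}$, hence $P_n$ is itself a conjugated tree and belongs to the class under consideration.

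Next, let $T$ be an arbitrary $n$-vertex conjugated tree. Since the class of $n$-vertex conjugated trees is a subclass of the class of all $n$-vertex trees, Theorem \ref{tau max tree} yields $\tau(T)\leq \tau(P_n)$. Combining this with the fact that $P_n$ lies in the conjugated class gives the theorem. I would also note, for completeness via Corollary \ref{tau_path}, that the explicit upper bound $\tau(T)\leq \frac{3n^2}{4}-\frac{n}{2}$ follows immediately, with equality when $T\cong P_n$.

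There is no genuine obstacle here, and in particular no need to repeat the algorithmic machinery of Section 2: the whole point is that we may apply Theorem \ref{tau max tree} without worrying whether the intermediate trees produced by Algorithm 1 are conjugated, because the extremal tree $P_n$ already happens to be conjugated. Thus the proof is essentially a one-line reduction.
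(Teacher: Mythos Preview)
Your proposal is correct and matches the paper's own proof, which simply says the result is obvious from Theorem~\ref{tau max tree}. If anything, your version is more complete, since you explicitly verify that $P_n$ is conjugated (via the matching $\{v_1v_2,\ldots,v_{n-1}v_n\}$) and hence lies in the class under consideration.
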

\begin{proof}
	The proof is obvious from Theorem~\ref{tau max tree}.
\end{proof}

\section{Conclusion and open problems}
In this paper, we studied the maximal and minimal graphs with respect to total-eccentricity index among trees, unicyclic and bicyclic graphs.
Moreover, we studied the extremal conjugated trees with respect to total-eccentricity index. 
It will be interesting to study the extremal conjugated unicyclic and bicyclic graphs with respect to total-eccentricity index.
\section*{Acknowledgements}
The first and second author are thankful to the Higher Education Commission of Pakistan for supporting this research under the grant 20-3067/NRPU/R$\&$D/HEC/12/831.
\end{document}